\documentclass[11pt]{article}
\usepackage{amsfonts}
\usepackage{amssymb}
\usepackage{graphicx}
\usepackage{mathtools}
\usepackage{color}

\newtheorem{theorem}{Theorem}[section]
\newtheorem{proposition}[theorem]{Proposition}

\newtheorem{remark}[theorem]{Remark}
\newtheorem{definition}[theorem]{Definition}

\newtheorem{example}[theorem]{Example}

\newcommand{\graph} {\mbox{\rm graph}}

\newcommand{\Img}{\mbox{\rm im\,}}

\newcommand{\bma}{\begin{bmatrix}}
\newcommand{\ema}{\end{bmatrix}}

\newcommand{\mR}{\mathbb{R}}

\newcommand{\X}{\mathcal{X}}
\newcommand{\F}{\mathcal{F}}
\newcommand{\M}{\mathcal{M}}
\newcommand{\cR}{\mathcal{R}}
\newcommand{\D}{\mathcal{D}}
\newcommand{\cL}{\mathcal{L}}

\newcommand{\eq}{\end{equation}}
\DeclareMathOperator{\im}{im}
\DeclareMathOperator{\rank}{rank}

\def\BibTeX{{\rm B\kern-.05em{\sc i\kern-.025em b}\kern-.08em
    T\kern-.1667em\lower.7ex\hbox{E}\kern-.125emX}}

\newcommand {\proof}{\par\noindent{\it Proof}. \ignorespaces}
\def\eproof{\space
       {\ \vbox{\hrule\hbox{\vrule height1.3ex\hskip0.8ex\vrule}\hrule}}
       \bigskip}
\definecolor{myBlue}{RGB}{30,144,255} 
\definecolor{myGreen}{RGB}{69,169,0} 
\definecolor{myRed}{RGB}{165,12,42}
\definecolor{myOrange}{RGB}{225,92,22}
\definecolor{color0}{rgb}{0.121568,0.46666,0.70588}
\definecolor{color1}{rgb}{1,0.4980,0.0549}
\definecolor{color2}{rgb}{0.17254,0.62745,0.17254901}
\definecolor{color3}{rgb}{0.83921,0.15294,0.156862}
\definecolor{color4}{rgb}{0.580392,0.4039215,0.7411764}
\definecolor{color5}{rgb}{0,0,0}

\newcommand{\delete}[1]{ }

\def\C{\mathbb{C}}

\makeatletter
\newsavebox{\@brx}
\newcommand{\llangle}[1][]{\savebox{\@brx}{\(\m@th{#1\langle}\)}%
  \mathopen{\copy\@brx\kern-0.5\wd\@brx\usebox{\@brx}}}
\newcommand{\rrangle}[1][]{\savebox{\@brx}{\(\m@th{#1\rangle}\)}%
  \mathclose{\copy\@brx\kern-0.5\wd\@brx\usebox{\@brx}}}
\makeatother

\newcommand{\E}{\mathcal{E}}

\newcommand {\mycomment}[1]{} 
\newcommand {\mat}  [1] {\left[\begin{array}{#1}}
\newcommand {\rix}      {\end{array}\right]}

\catcode`@=11     
\font\tenex=cmex10 
\newdimen\p@renwd
\setbox0=\hbox{\tenex B} \p@renwd=\wd0 
\def\bmat#1{\begingroup \m@th
  \setbox\z@\vbox{\def\cr{\crcr\noalign{\kern2\p@\global\let\cr\endline}}%
    \ialign{$##$\hfil\kern2\p@\kern\p@renwd&\thinspace\hfil$##$\hfil
      &&\quad\hfil$##$\hfil\crcr
      \omit\strut\hfil\crcr\noalign{\kern-\baselineskip}%
      #1\crcr\omit\strut\cr}}%
  \setbox\tw@\vbox{\unvcopy\z@\global\setbox\@ne\lastbox}%
  \setbox\tw@\hbox{\unhbox\@ne\unskip\global\setbox\@ne\lastbox}%
  \setbox\tw@\hbox{$\kern\wd\@ne\kern-\p@renwd\left[\kern-\wd\@ne
    \global\setbox\@ne\vbox{\box\@ne\kern2\p@}%
    \vcenter{\kern-\ht\@ne\unvbox\z@\kern-\baselineskip}\,\right]$}%
  \null\;\vbox{\kern\ht\@ne\box\tw@}\endgroup}
\catcode`@=12    
\newif\ifMDlatex
\catcode`@=11     
\def\MD@us#1{\csname#1style\endcsname}
\def\MD@uf#1{\csname#1font\endcsname}
\def\MD@t{text}
\def\MD@s{script}
\def\MD@ss{scriptscript}
\newdimen\MD@unit
\MD@unit\p@
\def\MD@changestyle#1{
  \relax\MD@unit0.1\fontdimen6\MD@uf{#1}0
  \everymath\expandafter{\the\everymath\MD@us{#1}}
}
\def\MD@dot{$\m@th\ldotp$}
\def\MD@palette#1{\mathchoice{#1\MD@t}{#1\MD@t}{#1\MD@s}{#1\MD@ss}}
\def\MD@ddots#1{{\MD@changestyle{#1}%
  \mkern1mu\raise7\MD@unit\vbox{\kern7\MD@unit\hbox{\MD@dot}}%
  \mkern2mu\raise4\MD@unit\hbox{\MD@dot}%
  \mkern2mu\raise \MD@unit\hbox{\MD@dot}\mkern1mu}}%
\def\MD@iddots#1{{\MD@changestyle{#1}%
  \mkern1mu\raise \MD@unit\hbox{\MD@dot}%
  \mkern2mu\raise4\MD@unit\hbox{\MD@dot}%
  \mkern2mu\raise7\MD@unit\vbox{\kern7\MD@unit\hbox{\MD@dot}}}}%
\def\MD@vdots#1{\vbox{\MD@changestyle{#1}%
    \baselineskip4\MD@unit\lineskiplimit\z@
    \kern6\MD@unit\hbox{\MD@dot}\hbox{\MD@dot}\hbox{\MD@dot}}}%
\ifMDlatex
  \DeclareRobustCommand\ddots{\mathinner{\MD@palette\MD@ddots}}%
  \DeclareRobustCommand\iddots{\mathinner{\MD@palette\MD@iddots}}%
  \DeclareRobustCommand\vdots{\mathinner{\MD@palette\MD@vdots}}%
\else
  \def\ddots{\mathinner{\MD@palette\MD@ddots}}%
  \def\iddots{\mathinner{\MD@palette\MD@iddots}}%
  \def\vdots{\mathinner{\MD@palette\MD@vdots}}%
\fi
\catcode`@=12    

\begin{document}

\title
{Nonlinear differential-algebraic systems with dissipative port-Hamiltonian structure}
\author{
\and V. Mehrmann\footnotemark[3]~\footnotemark[1]
\and  A.J. van der Schaft \footnotemark[2]
}
\maketitle
\begin{abstract}
Different representations  of nonlinear  dissipative Hamiltonian and port-Hamiltonian (pH) differential-algebraic equations (DAE) systems are derived and compared.
Using global geometric and algebraic points of view, including singular Lagrange and Dirac structures, as well as Morse families and their skew counterparts, constraints are incorporated in the models. The results are illustrated specifically with constrained mechanical systems using kinematic and geometric constraints.  The relation between the different representations is studied and in particular the conversion of Lagrange into Dirac algebraic constraints and vice versa is discussed.
To incorporate dissipation also maximally monotone structures are studied.
\end{abstract}
\noindent

{\bf Keywords.} Port-Hamiltonian system,  dissipative
Hamiltonian system, differential-algebraic equation, Lagrange structure, Dirac structure, maximally monotone structure, Morse family.
\noindent

{\bf AMS subject classification.}
15A18, 15A21, 15A22

\renewcommand{\thefootnote}{\fnsymbol{footnote}}
\footnotetext[1]{
Institut f\"ur Mathematik, IMoS/MA 4-5, TU Berlin, Fasanenstr. 89,
10623 Berlin, Germany.
\texttt{mehrmann@math.tu-berlin.de}.
}
\footnotetext[2]{
Bernoulli institute for Mathematics, Computer Science and Artificial Intelligence, Jan C. Willems Center for Systems and Control, University of Groningen, Nijenborgh 9, Groningen, the Netherlands. \texttt{a.j.van.der.schaft@rug.nl}.
}
\renewcommand{\thefootnote}{\arabic{footnote}}

\section{Introduction}\label{sec:intro}
In recent years the energy based modeling concept of port-Hamiltonian (pH) systems has become a very successful paradigm that is used in all areas of science and engineering, see e.g. \cite{SchJ14}. To incorporate constraints in pH systems such as those that arise in electrical or transport networks via Kirchhoff's laws, see e.g. \cite{GueF99a,GueF99b}, as balance equations in chemical engineering, see e.g. \cite{DieLSBS2002,DieUFSABBGKSS01}, or as geometric and/or kinematic (holonomic or non-holonomic) constraints in mechanical (multi-body) systems, see e.g. \cite{EicF98,RabR02,Sch90}, in recent years the concept of pH systems has been extended in various formulations to pH differential-algebraic equations (pHDAEs), see \cite{AltS25,BeaMXZ18,MehM19,MehU23,Sch13}. The most general form of so called \emph{extended pHDAEs} was introduced  in \cite{SchM18,SchM20} and analyzed in detail in \cite{MehS23,SchM23}  from an algebraic and geometric point of view for the case of linear systems.

In this paper we continue this analysis for nonlinear extended pHDAE systems, and in particular, we present a systematic study of the incorporation of constraints in the system via Lagrange multipliers using global geometric and algebraic points of view, including singular Lagrange and Dirac structures, as well as Morse families and their skew counterparts, see \cite{DalS99,SchM18}. The results are illustrated  with constrained mechanical systems using kinematic and geometric constraints.  The relation between the different representations is studied and in particular the conversion of Lagrange into Dirac algebraic constraints and vice versa is discussed. This resolves, in particular, an often discussed question of the energy based (pH) formulation of mechanical systems with constraints.

To incorporate dissipation into the system, then the concepts are extended by using nonlinear maximally monotone structures and appropriate representations of ports.

We discuss only real systems although most of the results can be formulated for complex problems in the same way. We also restrict ourselves to \emph{regular} DAEs,  see \cite{KunM24} for a detailed discussion of existence and uniqueness of solutions (in different function spaces) of more general singular DAEs.

The paper is organized as follows. In Section~\ref{sec:prelim} we introduce the necessary notation and in Section~\ref{sec:phdaes} the concepts of linear and nonlinear port-Hamiltonian differential-algebraic equations without dissipation and ports are discussed.
In Section~\ref{sec:augmented}
we show how to augment port-Hamiltonian systems by adding the constraints via Lagrange multipliers and we show how Lagrange algebraic constraints can be converted into Dirac algebraic constraints, and conversely.
As examples, we use constrained mechanical systems as well as optimality systems in optimal control.
In Section~\ref{sec:nlmech} we specifically discuss the pH formulation of nonlinear mechanical systems with geometric and kinematic constraints including the widely unknown Livens principle. Section~\ref{sec:nldiss} then uses the concept of maximally monotone structures to extend the previous results to systems with energy dissipation and ports.
The paper finishes with some conclusions and further research questions in Section~\ref{sec:conclusion}.

\section{Notation}\label{sec:prelim}
Consider an n-dimensional  linear space $\mathcal X$ with elements denoted by $x$. Let $T_x\mathcal X$ denote the \emph{tangent space} to $\mathcal X$ at $x \in \mathcal X$. Obviously, since $\mathcal X$ is a linear space $T_x\mathcal X$ can be identified with $\mathcal X$ for any $x$. Nevertheless, the distinction will be important if we generalize (later on) to \emph{manifolds} $\X$ (instead of linear spaces). The elements of $T_x{\mathcal X}$ will be denoted by $f$ and will be called \emph{flow} vectors. Furthermore, consider the \emph{dual space} $T_x^*{\mathcal X}$ of $T_x{\mathcal X}$, with elements denoted by $e$ and called \emph{effort} vectors. Obviously, in the linear case $T_x^*{\mathcal X}$ can be identified with $\X^*$.

The space $\mathcal X \times \mathcal X^*$ is endowed with two natural bilinear forms. First, the \emph{plus}-bilinear form
\begin{equation}\label{bilind}
\begin{array}{l}
\langle (f_1,e_1), (f_2,e_2) \rangle_+ := <e_2,f_1> + <e_1,f_2>,
\end{array}
\end{equation}
where $\langle \cdot ,\cdot \rangle$ denotes the duality product between the dual spaces $\mathcal X$ and $\mathcal X^*$.
Second, the \emph{minus}-bilinear form
\begin{equation}\label{bilinl}
\begin{array}{l}
\langle (x_1,e_1), (x_2,e_2) \rangle_- := <e_2,x_1> - <e_1,x_2>.
\end{array}
\end{equation}
In coordinates for $\mathcal X$ and dual coordinates for $\mathcal X^*$ these bilinear forms are represented by the  matrices
\[
\Sigma_+ := \mat{cc} 0 & I_n \\ I_n & 0 \rix, \quad \Sigma_- :=\mat{cc} 0 & I_n \\ -I_n & 0 \rix,
\]
respectively.
\begin{definition}\label{def:dirlag}
A subspace $\mathcal D \subset \mathcal X \times \mathcal X^*$ is called a \emph{Dirac structure} if the bilinear form $\langle \cdot, \cdot \rangle_+$ is zero on $\mathcal D$ and, moreover, $\mathcal D$ is maximal with respect to this property. A subspace $\mathcal L \subset \mathcal X \times \mathcal X^*$ is called a \emph{Lagrange structure}\footnote{Normally called a Lagrangian subspace. However, we adopt the terminology Lagrange structure in order to stress the similarity with Dirac structures.}
if the bilinear form $\langle \cdot, \cdot \rangle_-$ is zero on $\mathcal L$ and, moreover, $\mathcal L$ is maximal with respect to this property. Furthermore, a Lagrange structure $\mathcal L$ is called \emph{nonnegative} if the bilinear form $\Sigma_+$ is nonnegative whenever restricted to $\mathcal L$.
\end{definition}

Note that for any $(f,e) \in \D$ we have $0=\langle (f,e), (f,e) \rangle_+ = 2 \langle e,f\rangle$. This  corresponds physically to \emph{power conservation}. In fact, an equivalent definition of a Dirac structure $\D$ is that $\langle e,f\rangle =0$ for any $(f,e) \in \D$, and that $\D$ is maximal with respect to this property; cf. \cite{SchJ14}.

A special type of Dirac structure is the graph of any \emph{skew-symmetric} linear map from $\X^*$ to $\X$, see also Appendix A.
Similarly, a special type of Lagrange structure is the graph of a \emph{symmetric} linear map from $\X$ to $\X^*$. This last fact means that any quadratic function $\mathcal H(x)=\frac{1}{2}x^\top Qx$ on $\X$ with $Q=Q^\top$ defines a Lagrange structure. Indeed the graph of the map defined by the gradient of this quadratic function, i.e.,
\begin{equation*}
\{(x,e) \in \X \times \X^* \mid e= \frac{\partial \mathcal H}{\partial x}(x)=Qx \}
\end{equation*}
is a Lagrange structure.

A Dirac structure $\mathcal D \subset \mathcal X \times \mathcal X^*$ is called \emph{regular} if its projection on $\mathcal X^*$ is equal to $\X^*$, or equivalently, if $\D$ is the graph of a linear map from $\X^*$ to $\X$. If not, then we will call $\D$ a \emph{singular Dirac structure}. Similarly, we will call a Lagrange structure $\mathcal L \subset \mathcal X \times \mathcal X^*$ \emph{regular} if the projection of $\mathcal L$ on $\mathcal X$ is equal to $\X$, or equivalently, if $\cL$ is the graph of a linear map from $\X$ to $\X^*$. If this is not the case, then $\cL$ is called \emph{singular Lagrange structure}.

Dirac and Lagrange structures admit the following coordinate representations, see e.g. \cite{SchJ14,SchM18}. Take any coordinates $x$ for the $n$-dimensional linear space $\X$ and dual coordinates for $\X^*$. Then using matrices $K,L \in \mathbb R^{n,n}$, any \emph{Dirac structure} $\mathcal D \subset \mathcal X \times \mathcal X^*$ admits the kernel/image representation
\begin{equation}\label{diracspace}
\mathcal D = \ker \mat{cc} K & L \rix = \Img \mat{cc} L^\top \\ K^\top \rix \subset {\mathcal X} \times \mathcal X^*,
\end{equation}
with $K,L$ satisfying $\rank \mat{cc} K & L\rix=n$ and the generalized \emph{skew-symmetry condition}
\begin{equation}\label{dirac}
 KL^\top = - L K^\top.
\end{equation}
Conversely, any such pair $K,L$ defines a Dirac structure.

Analogously, any Lagrange structure $\mathcal L \subset {\mathcal X} \times \mathcal X^*$ can be represented as
\begin{equation}\label{lagrangespace}
\mathcal L = \ker \mat{cc} -S^\top & P^\top \rix = \Img \mat{cc} P \\ S \rix \subset {\mathcal X} \times \mathcal X^*,
\end{equation}
for certain matrices $S,P  \in \mathbb R^{n,n}$ satisfying $\rank \mat{c} P \\ S \rix =n$, as well as the generalized \emph{symmetry condition}
\begin{equation} \label{lagrange}
S^\top P= P^\top S.
\end{equation}
Conversely any such pair $P,S$ defines a Lagrange structure.

The above definition of  Dirac structures  and Lagrange structures modeled on linear spaces $\X$ can be extended to manifolds as follows.
Let $\mathcal X$ be a finite-dimensional \emph{manifold}. Denote by $T_x{\mathcal X}$ the \emph{tangent space} to $\mathcal X$ at $x \in \mathcal X$,
and by the dual space $T_x^*\mathcal X$ the \emph{cotangent space} to $\mathcal X$ at $x$. Then for every $x \in \X$, the product space $T_x{\mathcal X} \times T^*_x{\mathcal X}$ can be endowed with the bilinear forms defined in \eqref{bilind} and \eqref{bilinl}.

A \emph{Dirac structure} modelled on a \emph{manifold} $\mathcal X$ is defined as a
subbundle\footnote{$T\mathcal X \oplus  T^*\X$ is the vector bundle over $\X$ with fiber $T_x \X \times T^*_x \X$ at any $x \in \X$.} $\mathcal D \subset T\mathcal X \oplus  T^*\X$ such that for every $x \in \mathcal X$ the subspace $\D(x)\subset  T_x \X \times T^*_x \X$ is a Dirac structure as in Definition~\ref{def:dirlag}. A Dirac structure $\mathcal D$ on $\mathcal X$ is called \emph{regular} if for any $d \in \D \subset T\mathcal X \oplus  T^*\X$ the subspace $\mathcal D(x) \subset T_x\X \times T_x^*\X$, where $x$ is the projection of $d$ on $\X$, can be written as the graph of a linear mapping $J(x): T_x^* \X \to T_x\X$. In this case, $J(x)$ is skew-symmetric. Otherwise, the Dirac structure is called \emph{singular}.

Furthermore, a submanifold $\cL \subset T^*\X$ is called \emph{Lagrangian}\footnote{Usually a Lagrangian submanifold is equivalently defined as a maximal submanifold restricted to which the canonical symplectic form on $T^*\X$ is zero.} if at every $\ell \in \cL$ the tangent space $T_\ell \cL$ is a Lagrangian subspace of $T_x \X \times T_x^* \X$, where $x$ is the projection of $\ell \in T^*\X$ on $\X$.
The Lagrangian submanifold $\cL$ is called \emph{regular} if $\cL$ locally can be written as the graph of the gradient of a differentiable function $\mathcal H: \X \to \mathbb{R}$, that is,
\begin{equation*}
\cL:=\{(x, e) \in T^*\X \mid e = \frac{\partial \mathcal H}{\partial x}(x) \}
\end{equation*}
(and conversely, for any differentiable $\mathcal H$ this defines a Lagrangian submanifold). Equivalently, $\cL$ is regular if locally the projection of $\cL \subset T^*\X$ on $\X$ is equal to $\X$. If there does not exist locally such a function $\mathcal H$, then the Lagrangian submanifold $\cL$ will be called \emph{singular}.

\section{Port-Hamiltonian DAE systems without dissipation and external ports}\label{sec:phdaes}
In this section the definition of  port-Hamiltonian DAE systems without energy dissipation and without external ports will be recalled.  The extension to systems with energy dissipation and external ports will be given in Section \ref{sec:nldiss}.

\subsection{Linear port-Hamiltonian DAE systems without dissipation}
Let us start with the linear case following \cite{SchM18}.
\begin{definition}\label{def:defph}
A \emph{linear port-Hamiltonian DAE system} (pHDAE) on a linear state space $\X$ is defined by a pair $(\D,\cL)$, where $\D \subset \X\times \X^*$ is a Dirac structure, and $\cL \subset {\X} \times \X^*$ is a Lagrange structure. Its dynamics is given by the relation
\begin{equation}\label{phdef}
\{(x,\dot{x}) \mid \exists\, e \in \X^* \mbox{ s.t. } (x,e) \in \cL, (-\dot{x},e) \in \D \}.
\end{equation}
\end{definition}
In coordinates $x$ for $\X$ the Dirac structure $\mathcal D$ is specified as before by a pair of $n \times n$ matrices $K,L$, and the Lagrange structure $\cL$ is determined by a pair of $n \times n$ matrices $P,S$. Then in view of \eqref{lagrangespace} $(x,e) \in \cL$ can be written as $x=Pz, e=Sz$ for a certain $n$-vector $z$. Furthermore, $(-\dot{x},e) \in \mathcal D$ amounts to $K\dot{x}=Le$. Substitution of the first expressions into the second results in the following coordinate representation of a linear port-Hamiltonian DAE system
\begin{equation}
\label{kpls}
KP\dot{z} =LSz.
\end{equation}
This constitutes a system of \emph{differential-algebraic equations} (DAEs) $E\dot{z}=Az$, where $E=KP$ and $A=LS$. In fact, there are two types of algebraic constraints corresponding to singularity of $E=KP$. The first is specified by the singularity of the matrix $P$ (corresponding to singularity of the Lagrange structure $\cL$), and the second by the singularity of the matrix $K$ (corresponding to singularity of the Dirac structure $\D$). Following \cite{SchM18}, the first type of algebraic constraints are thus called \emph{Lagrange algebraic constraints}, and the second \emph{Dirac algebraic constraints}. Explicitly, the Lagrange algebraic constraints are given as $x=Pz \in \Img P$, and the Dirac algebraic constraints as $Sz \in \Img K^\top$. (Note that because of \eqref{diracspace} $e=Sz \in \Img K^\top$.)

Note that if $\cL= \{ (x,e) \mid e=Qx \}$ for some symmetric matrix $Q$, then the port-Hamiltonian DAE system $(\D,\cL)$ satisfies
\begin{equation}
\label{Qcons}
\frac{d}{dt} \frac{1}{2} x^\top Q x= x^\top Q \dot{x} =  0,
\end{equation}
since $(f=-\dot{x}, e=Qx) \in \D$. In physical applications $\frac{1}{2} x^\top Q x$ equals the total stored energy (Hamiltonian), in which case \eqref{Qcons} represents conservation of total energy.

In the case that the Lagrange structure $\cL$ is \emph{singular}, it is \emph{not} generated by a Hamiltonian $\mathcal H= \frac{1}{2} x^\top Q x$ for some $Q=Q^\top$. However in this case one may consider the quadratic function
$\frac{1}{2} z^\top S^\top P z$ of the vector $z$. Indeed,
\begin{equation*}
\frac{d}{dt} \frac{1}{2} z^\top S^\top P z = e^\top \dot{x}=0,
\end{equation*}
since $x=Pz$ and $e=Sz$, and $(-\dot{x},e)=0$. Thus $\frac{1}{2} z^\top S^\top P z$ takes over the role of the stored energy.

A special case of \eqref{kpls} are pHDAEs as defined in \cite{MehM19} with $K=I$ and $L$ skew-symmetric, and thus only exhibiting Lagrange algebraic constraints. The dual special case are pHDAEs as defined in \cite{Sch13} with $P=I$ and $S$ symmetric, and thus only admitting Dirac algebraic constraints.

\subsection{Nonlinear port-Hamiltonian DAE systems without dissipation and external ports}
Definition~\ref{def:defph} extends to the nonlinear case as follows; see \cite{SchM20}.
\begin{definition}\label{def:defnlph}
A port-Hamiltonian system on a state manifold $\X$ is defined by a pair $(\D,\cL)$, where $\D$ is a Dirac structure on $\X$ (i.e., a subbundle of $T\X \oplus T^*\X$), and $\cL \subset T^*\X$ is a Lagrangian submanifold. Its dynamics is given by the relation
\begin{equation*}
\{(x,\dot{x}) \in T\X \mid\ \exists\, e \ \in T_x^* \X \mbox{ s.t. } (x,e) \in \cL, (-\dot{x},e) \in \D(x) \}.
\end{equation*}
\end{definition}
In the case that $\cL$ is generated by a Hamiltonian $\mathcal H: \X \to \mR$, i.e., $\mathcal L=\{(x,e) \in T^*\X \mid e= \frac{\partial \mathcal H}{\partial x}(x) \}$, we again obtain energy conservation
\begin{equation*}
\frac{d}{dt}\mathcal H = \dot x^\top \frac{\partial \mathcal H}{\partial x}(x) = 0,
\end{equation*}
since $(-\dot{x},\frac{\partial \mathcal H}{\partial x}(x)) \in \D(x)$.

Analogously to the linear case, also for singular Lagrangian submanifolds energy functions can be defined. This makes use of the representation of Lagrangian submanifolds using Morse families, cf. Appendix B, Proposition \ref{prop:nonllagrangeaug}. Indeed, under regularity conditions,  for any Lagrangian submanifold $\cL \subset T^*\X$, there locally exists a Morse family $F(x,\mu)$ such that locally
\begin{equation}
\label{eq:morse}
\cL = \{(x,e) \mid e=\frac{\partial F}{\partial x}(x,\mu), 0=\frac{\partial F}{\partial \mu}(x,\mu) \}.
\eq
Hence for the port-Hamiltonian DAE system represented by $(\D,\cL)$
it holds that
\[
\frac{d}{dt} F(x,\mu)= \dot{x}^\top \frac{\partial F}{\partial x}(x,\mu) + \dot{\mu}^\top \frac{\partial F}{\partial \mu}(x,\mu) = \dot{x}^\top e =0.
\]
Here the second equality results from the equation $0=\frac{\partial F}{\partial \mu}(x,\mu)$ in \eqref{eq:morse}, while the last equality follows from the fact that $(-\dot{x},e) \in \D$.

As in the linear case, singularity of the Dirac structure and/or of the Lagrangian submanifold defining the port-Hamiltonian system translates into the existence of (nonlinear) \emph{algebraic constraints}. In fact, if $\cL$ is singular then the inclusion $(x,e) \in \cL$ entails an algebraic constraint on $x$, and if $\D$ is singular then $(-\dot{x},e) \in \D$ defines a constraint on $e$, which via $(x,e) \in \cL$ translates into an algebraic constraint on $x$.

Analogously to \eqref{dirac}, any Dirac structure $\D$ on the $n$-dimensional state space manifold $\X$ can be locally written as
\begin{equation*}
\D(x) = \{(f,e) \in T_x\X \times T_x^*\X \mid K(x)f + L(x)e=0 \}
\end{equation*}
for certain square $n \times n$ matrix functions $K(x), L(x)$ satisfying
\begin{equation*}
K(x)L^\top (x)= - L(x) K^\top (x), \quad \rank \mat{cc} K(x) & L(x)\rix=n, \quad x \in \X.
\end{equation*}
This means that if the Lagrange submanifold $\cL$ is regular, i.e., of the form $\mathcal L=\{(x,e) \in T^*\X \mid e= \frac{\partial \mathcal H}{\partial x}(x) \}$ for a certain $\mathcal H:\X \to \mR$, then the nonlinear port-Hamiltonian DAE locally takes the form
\begin{equation*}
K(x) \dot{x} = L(x) \frac{\partial \mathcal H}{\partial x}(x)
\end{equation*}
Obviously Dirac algebraic constraints correspond to the singularity of $K(x)$.

Local coordinate expressions for a singular Lagrangian submanifold, and the corresponding Lagrange algebraic constraints, are more involved; see also \cite{SchM20}. This will be detailed, among other things, in Section \ref{sec:augmented}.

\section{Lagrange and Dirac algebraic constraints, and augmentation by Lagrange multipliers}\label{sec:augmented}

As discussed above, the algebraic constraints of general port-Hamiltonian DAE systems can be distinguished into Lagrange and Dirac algebraic constraints.
In this section we recall from \cite{SchM18} (linear case) and \cite{SchM20} (nonlinear case) how Lagrange algebraic constraints can be converted into Dirac algebraic constraints, and conversely Dirac algebraic constraints into Lagrange algebraic constraints. We show how this can be done by the use of \emph{Lagrange multipliers}; see Appendix A for the required background and statements. The multipliers will be used to augment systems with regular Lagrange and Dirac structure
without changing the energy function (Hamiltonian). This flexibility of formulating the algebraic constraints either as Lagrange or as Dirac algebraic constraints will be important in applications. This will be demonstrated by the examples in the present and next section.

In this section we restrict ourselves to systems without energy dissipation and external ports. The extension
to systems with energy dissipation and external ports will be given in Section~\ref{sec:nldiss}.
Again, we first discuss the linear case and then extend the results to nonlinear systems.

\subsection{Algebraic constraints in the linear case}\label{sec:auglin}
We start with a systematic discussion of the linear case. Throughout this subsection we consider a linear pHDAE system $(\D,\cL)$ given by \eqref{phdef}, with coordinate representation \eqref{kpls}.
%
%

\subsubsection{Singular Lagrange structure} Consider a \emph{regular} Dirac structure $\mathcal D= \graph(-J) \subset \X \times \X^*$, where $J$ is a skew-symmetric matrix
and a \emph{singular} Lagrange structure $\cL \subset \X \times \X^*$.

As detailed in Appendix A such a singular Lagrange structure can be always, see e.g. \cite{Hir79}, represented as $\mathcal L=\{(x,e)  |\ e=Qx + C_\mu \mu, C_\mu ^\top x=0\}$ for certain matrices $C_\mu$ and $Q=Q^\top$. The function $F(x,\mu)= \frac{1}{2} x^\top Q x + \mu^\top C^\top_\mu x$ is called a \emph{Morse family} for the Lagrange structure.

To obtain an augmented pHDAE representation, we consider the  \emph{augmented} state $(x,\mu) \in \X_a$, and augmented efforts $(e,e_\mu) \in \X_a^*$, together with the \emph{augmented Lagrange structure}
\[
\mathcal L_a=\graph \mat {cc} Q & C_\mu \\ C_\mu^\top & 0 \rix.
\]
Furthermore, we consider the \emph{augmented} Dirac structure
\[
\mathcal D_a=\left \{ (f,f_\mu,e,e_\mu) \in \X_a \times \X_a^* \mid \mat {cc} I & 0 \\ 0 & 0 \rix  \mat{c} f\\ f_\mu \rix=
- \mat {cc} J & 0 \\ 0 & I \rix  \mat{c} e\\ e_\mu \rix \right \}.
\]
Note that the augmented Lagrange structure $\mathcal L_a$ is \emph{regular}, while the augmented Dirac structure $\mathcal D_a$ is now \emph{singular}.
This results in an augmented pHDAE of the form \eqref{kpls} given by
\begin{equation} \label{extphdae1}
\mat {cc} I & 0 \\ 0 & 0 \rix \mat {cc} I & 0 \\ 0 & I \rix \mat{c} \dot x\\ \dot \mu \rix=
\mat {cc} J & 0 \\ 0 & I \rix
\mat{cc} Q & C_\mu\\ C_\mu^\top & 0 \rix
\mat{c} x\\ \mu \rix,
\end{equation}
with Dirac algebraic constraints, but \emph{without} Lagrange algebraic constraints. It is immediately verified that this augmented pHDAE system covers the original pHDAE in the sense that the $x$-part of the solutions of \eqref{extphdae1} coincides with the solutions of the original system.

Interestingly, the Hamiltonian of the augmented pHDAE system \eqref{extphdae1} is given by
\[
\mathcal{H}_1(x_a)=
\frac 12 \mat{c} x\\ \mu \rix^\top\mat{cc} Q & C_\mu\\ C_\mu^\top & 0 \rix
\mat{c} x\\ \mu \rix =
\frac 12 x^\top
Q x.
\]
Thus the Lagrange multipier $\mu$ does \emph{not} contribute to the Hamiltonian, and the Hamiltonian of \eqref{extphdae1} equals the Hamiltonian of the original pHDAE system.

\begin{example} \label{ex:mech}
{\rm  Consider a linear mechanical system with configuration coordinates $q$ subject to geometric constraints $G_\mu^\top {q} = 0$ for some matrix $G_\mu$. The Hamiltonian is of the form kinetic energy plus potential energy:
\[
\mathcal{H}_1(q,p)= \frac{1}{2} p^\top M^{-1}p + \frac{1}{2}q^\top \Phi q,
\]
where $p=M\dot{q}$ are the momenta, and $\Phi=\Phi^\top$ is a matrix defining the potential energy. Note that the geometric constraints $G_\mu^\top {q} = 0$ are \emph{Lagrange} algebraic constraints, corresponding to the singular Lagrange structure
\[
\cL= \{(q,p,e_q,e_p) \mid \bma e_q \\ e_p \ema - \bma \Phi & 0 \\ 0 & M^{-1} \ema
\bma q \\ p \ema \in \Img \bma G_\mu \\ 0 \ema, G_\mu^\top {q} = 0 \},
\]
which can be equivalently written as
\[
\cL= \{(q,p,e_q,e_p) \mid \exists \ \mu \mbox{ s.t. } \bma e_q \\ e_p \ema = \bma \Phi & 0 \\ 0 & M^{-1} \ema
\bma q \\ p \ema + \bma G_\mu \\ 0 \ema \mu, G_\mu^\top {q} = 0 \}.
\]
Thus, the regular augmented Lagrange structure $\cL_a$ in the augmented state  $x_a=\mat{ccc}q^\top & p^\top & \mu^\top\rix^\top$ is given as
\[
\cL_a= \left \{(q,p,\mu, e_q,e_p, e_\mu) \mid \bma e_q \\ e_p \\ e_\mu \ema = \bma \Phi & 0 & G_\mu \\ 0 & M^{-1} & 0 \\ G_\mu^\top & 0 & 0 \ema \bma q \\ p \\ \mu \ema \right \}.
\]
Correspondingly, the singular augmented Dirac structure is
\begin{equation*}
\D_a = \left \{(f_q,f_p,f_\mu, e_q,e_p,e_\mu)) \mid \bma I & 0 & 0 \\ 0 & I & 0 \\ 0 & 0 & 0 \ema
\bma f_q \\ f_p \\ f_\mu \ema + \bma 0 & I & 0 \\-I & 0 & 0 \\ 0 & 0 &I \ema \bma e_q \\ e_p \\ e_\mu \ema =0 \right \}.
\end{equation*}
Taken together, this results in the augmented pHDAE system
\begin{eqnarray*}
&&
\mat{ccc} I & 0 & 0 \\ 0 & I & 0 \\ 0 & 0 & 0 \rix
\mat{cccc} I & 0 & 0 \\ 0 & I & 0 \\ 0 & 0 & I \rix\mat{c} \dot {q} \\ \dot {p} \\ \dot \mu \rix \\  && \qquad
=
\mat{ccc} 0 & I & 0 \\ -I & 0 & 0\\ 0 & 0  & I \rix \mat{ccc} \Phi & 0 & G_\mu \\ 0 & M^{-1} & 0 \\ G_\mu^\top  & 0 & 0 \rix
\mat{c}q \\ p \\ \mu \rix.
\end{eqnarray*}
As a consequence, the Lagrange algebraic constraints $G_\mu^\top q=0$ for the original system have been converted into Dirac algebraic constraints for the augmented system.
}
\end{example}

\subsubsection{Singular Dirac structure}
Consider now instead a \emph{regular} Lagrange structure $\mathcal L= \graph(Q)$, where $Q$ is a symmetric
matrix, and a \emph{singular} Dirac structure $\D$. As detailed in Appendix A such a singular Dirac structure can be always represented as
\[
\mathcal D=\{(f,e)  |\ \exists\ e_\lambda,\ \mbox{\rm s.t. } -f=Jx+ C_\lambda e_\lambda,\  C_\lambda^\top e_\lambda=0\}
\]
for a certain skew-symmetric matrix $J$ and amatrix $C_\lambda$.
To obtain an extended pHDAE representation we consider an  augmented state vector tuple $x_a=(x,\lambda) \in \X_a$ and an augmented tuple $(f,f_\lambda,e,e_\lambda) \in \X_a \times \X_a^*$ of flows and efforts. This leads to a \emph{regular} augmented Dirac structure
\[
\mathcal D_a=\left \{(f, f_\lambda,e, e_\lambda) \mid -\bma f \\ f_\lambda \ema=\bma J & C_\lambda \\C_\lambda^\top & 0 \ema \bma e \\ e_\lambda \ema \right \},
\]
and a singular augmented Lagrange structure
\[
\mathcal L_a=\left \{ (x,\lambda,e,e_\lambda) \in \X_a \times \X_a^* | \mat {cc} Q & 0 \\ 0 & I \rix  \mat{c} x\\ \lambda \rix+
\mat {cc} I & 0 \\ 0 & 0 \rix  \mat{c} e\\ e_\lambda \rix =0 \right \}.
\]
Taken together, this gives an augmented pHDAE of the form \eqref{kpls} in the augmented state $x_a=\mat{cc} x^\top & \lambda^\top \rix^\top$ with
\begin{equation}\label{extphdae2}
\mat {cc} I & 0 \\ 0 & I\rix \mat {cc} I & 0 \\ 0 & 0 \rix \mat{c} \dot x\\ \dot \lambda \rix=
\mat {cc} J & C_\lambda \\ -C_\lambda^\top & 0\rix
\mat{cc} Q & 0 \\ 0 & I \rix
\mat{c} x\\ \lambda \rix,
\end{equation}
where, compared to the original system, the Dirac algebraic constraint $0=C_\lambda^\top e_\lambda = C_\lambda^\top Qx$ is now interpreted as a Lagrange algebraic constraint with respect to the extended state $x_a=(x,\lambda)$. Finally, note that the energy function of the augmented system \eqref{extphdae2} is given as
\[ \mathcal{H}_2(x_a) =\frac 12\mat{c} x\\ \lambda \rix^\top
\bma I & 0 \\ 0& 0 \ema \mat{cc} Q &0 \\ 0 & I \rix
\mat{c} x\\ \lambda \rix=\frac 12 x^\top
Q x.
\]
Thus, again, the Lagrange multiplier  $\lambda$ does not contribute to the Hamiltonian.
Note that this system has the form of a pHDAE system as in \cite{MehM19}.

\begin{example} \label{ex:mech2}{\rm  Consider the mechanical system  from Example~\ref{ex:mech}, but now subject to \emph{kinematic} constraints $G_\lambda^\top \dot{q} = 0$ for some matrix $G_\lambda$. Inserting $\dot q=M^{-1}p$, the kinematic constraints can be expressed as the Dirac algebraic constraint $G_\lambda^\top M^{-1}p =0$.

By d'Alembert's principle the kinematic constraints give rise to constraint forces $G_\lambda \lambda$ for some vector of Lagrange multipliers $\lambda$. This results in the extended pHDAE of the form \eqref{kpls} in  $x_a=\mat{ccc}q^\top & p^\top & \lambda^\top\rix^\top$ given by
\begin{eqnarray*}\nonumber
&&
\mat{ccc} I & 0 & 0 \\ 0 & I & 0 \\ 0 & 0 & I \rix\mat{ccc} I & 0 & 0 \\ 0 & I & 0 \\ 0 & 0 & 0 \rix \mat{c}\dot{q} \\ \dot{p} \\ \dot{\lambda} \rix \\
&&  \qquad=
\bma 0 & I & 0 \\ -I & 0 & G_\lambda \\ 0 & -G_\lambda^\top  & 0 \ema \mat{ccc} \Phi & 0 & 0 \\ 0 & M^{-1} & 0 \\ 0 & 0 & I \rix
\mat{c}q \\ p \\ \lambda \rix,
\end{eqnarray*}
where again the Hamiltonian is
\[
\mathcal{H}_2(q,p,\lambda)= \frac{1}{2} p^\top M^{-1}p + \frac{1}{2}q^\top \Phi q.
\]
}
\end{example}

\subsection{Algebraic constraints in the nonlinear case}
In this subsection we  extend the results of the previous subsection to the nonlinear case.
\subsubsection{Singular Dirac structure and regular Lagrangian submanifold}\label{sec:singdirac}
Consider a port-Hamiltonian DAE system $(\D,\cL)$, on a state space manifold $\X$, where the Dirac structure $\D$ is \emph{singular}, while the Lagrangian submanifold $\cL$ is \emph{regular}, i.e., locally $\cL=\{(x,e) \in T^*\X \mid e= \frac{\partial  \mathcal{H}}{\partial x}(x)\}$ for some generating function $\mathcal{H}$ (the Hamiltonian). Then by Proposition \ref{prop:nonldiracaug} in Appendix A,
the Dirac structure can be locally written as
\begin{eqnarray*}
\nonumber
\D(x) &=& \left \{(f,e) \in T_x\X \times T_x^*\X \mid \exists\, e_\lambda \in \Lambda^* \mbox{ s.t. }-f = J(x)e + C_\lambda(x) e_\lambda,\right .\\ &&\qquad \left . 0  =  C_\lambda^\top(x)e  \right \},
\end{eqnarray*}
for some skew-symmetric matrix function $J(x)$ and a matrix function $C_\lambda(x)$. This means that on the augmented state space manifold $\X_a:= \X \times \Lambda$ one defines the \emph{regular} Dirac structure $\D_a$ as the graph of the skew-symmetric matrix function
\begin{equation*}
J_a = \begin{bmatrix} -J(x) & -C_\lambda(x) \\[2mm]
C_\lambda^\top(x) & 0 \end{bmatrix}.
\end{equation*}
On the other hand, the regular Lagrangian submanifold $\cL \subset T^*\X$ is augmented to a \emph{singular} Lagrangian submanifold $\cL_a \subset T^*\X_a$ defined as
\begin{equation}
\label{L_a}
\cL_a := \{ (x,\lambda, e, e_\lambda) \in T^* \X_a \mid e= \frac{\partial \mathcal H}{\partial x}(x), \lambda = 0 \},
\end{equation}
corresponding to the Lagrange algebraic constraints $0=\lambda$. This results in the following proposition.
\begin{proposition} Consider a port-Hamiltonian DAE system $(\D,\cL)$ on a state space manifold $\X$, where the Dirac structure $\D$ is \emph{singular}, while the Lagrangian submanifold $\cL$ is \emph{regular}. Then it can be augmented to a port-Hamiltonian DAE system $(\D_a,\cL_a)$ on the augmented space manifold $\X_a$ with regular Dirac structure and singular Lagrangian submanifold. Furthermore, the $x$-dynamics of this augmented system is the same as of the original system.
\end{proposition}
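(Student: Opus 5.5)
The proof proceeds in three steps, all with respect to the local representation of $\D$ given just before the statement (via Proposition~\ref{prop:nonldiracaug} in Appendix A): $J(x)$ skew-symmetric and $C_\lambda(x)$ such that $(f,e)\in\D(x)$ iff there is $e_\lambda$ with $-f=J(x)e+C_\lambda(x)e_\lambda$ and $C_\lambda^\top(x)e=0$.

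\emph{Step 1: $\D_a$ is a regular Dirac structure.} The matrix $J_a$ is skew-symmetric pointwise, because $J$ is skew and the off-diagonal blocks $-C_\lambda$ and $C_\lambda^\top$ are negative transposes of each other. The graph of a skew-symmetric map $T^*_{x_a}\X_a\to T_{x_a}\X_a$ is a Dirac structure: it is isotropic for $\langle\cdot,\cdot\rangle_+$ since $e_a^\top J_a e_a=0$, and it has dimension $\dim\X_a$, which gives maximality. It depends smoothly on $x$, so it is a subbundle. Being a graph over the efforts, it is regular by definition.

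\emph{Step 2: $\cL_a$ is a singular Lagrangian submanifold.} I read \eqref{L_a} as also imposing $e_\lambda$ free, since otherwise $\cL_a$ would have the wrong dimension. Then $\cL_a=\{e=\partial\mathcal H/\partial x,\ \lambda=0,\ e_\lambda\in\Lambda^*\}$ is the product of the Lagrangian graph $\cL\subset T^*\X$ with the conormal fiber $\{\lambda=0\}\times\Lambda^*\subset T^*\Lambda$. Each factor is Lagrangian, so the product is Lagrangian in $T^*\X_a$. Equivalently, $\cL_a$ is generated by the Morse family $F(x,\lambda,\mu)=\mathcal H(x)+\mu^\top\lambda$. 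Its projection onto $\X_a$ is $\X\times\{0\}\neq\X_a$, so $\cL_a$ is singular.

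\emph{Step 3: equality of the dynamics.} This is the main point. Write out Definition~\ref{def:defnlph} for $(\D_a,\cL_a)$. A pair $((x,\lambda),(\dot x,\dot\lambda))$ belongs to the dynamics iff $\lambda=0$ and there exists $e_\lambda$ with $e=\partial\mathcal H/\partial x(x)$ and $(-\dot x,-\dot\lambda,e,e_\lambda)\in\D_a$. Membership in $\D_a=\graph J_a$ means
\[
-\dot x=-J(x)e-C_\lambda(x)e_\lambda,\qquad -\dot\lambda=C_\lambda^\top(x)e .
\]
Since $\lambda\equiv0$ along solutions, $\dot\lambda=0$, and so $C_\lambda^\top(x)\,\partial\mathcal H/\partial x(x)=0$. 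The first equation then reads $\dot x=J(x)e+C_\lambda(x)e_\lambda$ with $e_\lambda$ free. This is exactly the statement that $(-\dot x,e)\in\D(x)$ up to the sign convention for $f$. The sign must be matched against the appendix representation, replacing $e_\lambda$ by $-e_\lambda$ if necessary, and the sign choice in $J_a$ is absorbed the same way. Conversely, any solution $x(\cdot)$ of the original system together with $\lambda\equiv0$ and the multiplier $e_\lambda$ from $\D(x)$ gives a solution of the augmented system. Hence the $x$-projections coincide.

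The only delicate points are the dimension count for $\cL_a$ and the sign bookkeeping between $J_a$ and the local representation of $\D$; the remaining steps are routine. As a side remark, energy conservation for $\mathcal H$ follows as well, since $e_a^\top J_a e_a=0$.
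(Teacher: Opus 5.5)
Your proposal is correct and follows the paper's route: the paper augments $\D$ via Proposition~\ref{prop:nonldiracaug} to $\graph J_a$ and $\cL$ to $\cL_a$ as in \eqref{L_a}, where $e_\lambda$ is indeed free, and leaves implicit the verifications you supply, including the solution-level argument that $\lambda\equiv 0$ forces $C_\lambda^\top(x)\frac{\partial \mathcal H}{\partial x}(x)=0$. Your sign hedging is unnecessary: with $f=-\dot x$, membership in $\graph J_a$ gives exactly $\dot x = J(x)e + C_\lambda(x)e_\lambda$ and $\dot\lambda = -C_\lambda^\top(x)e$, which matches \eqref{Diracconstraint2} without any substitution $e_\lambda \mapsto -e_\lambda$.
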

Comparing with the linear case (i.e., the definition of the matrices $P,S$ representing the singular Lagrange structure), we note that the singular Lagrangian submanifold $\cL_a$ defined in \eqref{L_a} can be also represented as the solution set of
\begin{equation*}
\bma I & 0  \\[2mm] 0 & 0  \ema
\bma e  \\[2mm] e_\lambda \ema=
\bma \frac{\partial \mathcal H_a}{\partial x}(x,\lambda) \\[2mm] \frac{\partial \mathcal H_a}{\partial x}(x,\lambda) \ema,
\end{equation*}
where
\begin{equation*}
\mathcal{H}_a(x,\lambda):= \mathcal H(x) + \frac 12 \lambda^\top \lambda.
\end{equation*}
Thus, Dirac algebraic constraints can always be converted into Lagrange algebraic constraints by augmenting with Lagrange multipliers. Furthermore, note that for the augmented pHDAE system
\[
\frac{d}{dt} \mathcal H(x)= \dot{x}^\top \frac{\partial \mathcal H}{\partial x}(x)= \dot{x}^\top e=0.
\]
Indeed $\dot{x} = J(x) e + C_\lambda(x)e_\lambda$, and therefore
\[
\dot{x}^\top e = e^\top J^\top(x) e + \lambda^\top C_\lambda^\top (x) e = 0,
\]
by the skew-symmetry of $J(x)$ and since $\lambda=0$. Thus $\mathcal H$ (independent of $\lambda$) is still the Hamiltonian of the augmented system $(\D,\cL)$.

\subsubsection{Singular Lagrangian submanifolds}
Alternatively, consider a nonlinear port-Hamiltonian DAE system $(\D,\cL)$, where the Lagrangian submanifold $\cL$ is singular. Then, using Proposition \ref{prop:nonllagrangeaug},
$\cL \subset T^*\X$ can be locally represented, using a Morse family $F(x,\mu)$, as
\begin{equation}
\label{morse}
\cL= \{ \left(x, \frac{\partial F}{\partial x}(x, \mu) \right) \mid \frac{\partial F}{\partial \mu}(x, \mu)=0 \}.
\end{equation}
This leads to the regular augmented Lagrangian submanifold
\begin{equation*}
\cL_a = \{(x,\lambda,e,e_\mu) \mid e= \frac{\partial F}{\partial x}(x,\mu), e_\mu = \frac{\partial F}{\partial \lambda}(x,\mu) \}
\end{equation*}
on the augmented space of points $(x,\mu)$.

Furthermore, the Dirac structure $\D$ on $\X$ may be augmented with the trivial Dirac structure $\{(f_{\mu},e_{\mu}) \mid e_{\mu}=0 \}$, so as to obtain an augmented Dirac structure $\D_a$. This defines an {\it extended} port-Hamiltonian system $(\D_a,\cL_a)$ with an {\it explicit} energy function $F(x,\mu)$, and thus without any Lagrange algebraic constraints.

This is summarized in the following proposition.
\begin{proposition}
Consider a port-Hamiltonian DAE system $(\D,\cL)$ with Lagrange algebraic constraints. Represent $\cL$ by a Morse family $F(x,\mu)$ on $\X_a= \X \times \Lambda$ as in \eqref{morse}, and define the augmented Dirac structure $\D_a$ as the product of $\D$ with the Dirac structure $\{(f_{\mu},e_{\lambda}) \mid e_{\mu}=0 \}$. Then $(\D,\cL)$ has been transformed into the port-Hamiltonian system $(\D_a,\cL_a)$ with state space $\X_a$, in such a way that its $x$-dynamics is the same as that of the original port-Hamiltonian DAE system.
\end{proposition}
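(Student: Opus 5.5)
The plan is to verify two things directly from Definition~\ref{def:defnlph}: first, that $(\D_a,\cL_a)$ is a port-Hamiltonian system on $\X_a=\X\times\Lambda$; second, that its $x$-dynamics coincide with those of $(\D,\cL)$. Throughout I work locally, in a neighbourhood where the Morse family representation \eqref{morse} of Proposition~\ref{prop:nonllagrangeaug} holds. I read the (slightly garbled) definitions in the statement as follows:
\[
\cL_a=\left\{(x,\mu,e,e_\mu)\;\middle|\; e=\frac{\partial F}{\partial x}(x,\mu),\ e_\mu=\frac{\partial F}{\partial \mu}(x,\mu)\right\},
\qquad
\D_a(x,\mu)=\D(x)\times\{(f_\mu,e_\mu)\mid e_\mu=0\}.
\]

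\textbf{Structural step.} Since $\cL_a$ is the graph of the gradient of the function $F$ on $\X_a$, it is a regular Lagrangian submanifold of $T^*\X_a$; this was already noted in the Notation section. For $\D_a$, I will show that the subspace $\{(f_\mu,e_\mu)\mid e_\mu=0\}=\Lambda\times\{0\}$ is a Dirac structure on the fibre $T\Lambda\oplus T^*\Lambda$.
\begin{itemize}
\item The pairing $\langle e_\mu,f_\mu\rangle$ vanishes on it, because $e_\mu=0$.
\item It is maximal, because its dimension is $\dim\Lambda$.
\end{itemize}
The product of two Dirac structures is again a Dirac structure, since the $+$-form on the product space is the sum of the two $+$-forms and the dimensions add up to $\dim\X_a$. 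Smoothness of the subbundle is inherited from $\D$.

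\textbf{Dynamics step.} I unpack the relation of Definition~\ref{def:defnlph} for the pair $(\D_a,\cL_a)$. A pair $((x,\mu),(\dot x,\dot\mu))$ belongs to the dynamics iff there exist $(e,e_\mu)$ such that:
\begin{itemize}
\item $e=\partial F/\partial x(x,\mu)$ and $e_\mu=\partial F/\partial\mu(x,\mu)$, from $\cL_a$;
\item $(-\dot x,e)\in\D(x)$ and $e_\mu=0$, from $\D_a$, with $\dot\mu$ left free.
\end{itemize}
The condition $e_\mu=0$ therefore reproduces exactly the constraint $\partial F/\partial\mu(x,\mu)=0$. Combined with $e=\partial F/\partial x(x,\mu)$, this says precisely that $(x,e)\in\cL$ by \eqref{morse}, and together with $(-\dot x,e)\in\D(x)$ we obtain the original dynamics.

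For the converse inclusion, take any solution $x(\cdot)$ of $(\D,\cL)$ with effort $e(\cdot)$. I must produce a curve $\mu(\cdot)$ with $\partial F/\partial\mu(x,\mu)=0$ and $e=\partial F/\partial x(x,\mu)$. This is the main obstacle: it requires the $\mu$ realizing a point of $\cL$ to depend (locally) smoothly on that point.

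I plan to use the Morse family nondegeneracy condition from Appendix~B, namely that the matrix $\bigl[\partial^2F/\partial\mu\partial x \;\; \partial^2F/\partial\mu^2\bigr]$ has full rank. This condition makes
\[
C=\left\{(x,\mu)\;\middle|\;\frac{\partial F}{\partial\mu}(x,\mu)=0\right\}
\]
a submanifold, and the map $C\to\cL$, $(x,\mu)\mapsto (x,\partial F/\partial x)$, an immersion. In the minimal (reduced) Morse family I expect this map to be a local diffeomorphism. With that, I lift the curve $(x(t),e(t))\in\cL$ to a curve $\mu(t)$, which is differentiable whenever $x$ and $e$ are.

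If injectivity fails, I fall back on the interpretation that ``same $x$-dynamics'' means the projection of the augmented solution set onto $x$ equals the original solution set. I will then choose $\mu(t)$ through a measurable or continuous selection along the curve. Finally, I will remark that $\dot\mu$ is unconstrained by the equations, so it plays no role in the $x$-dynamics, and that $F(x,\mu)$ is conserved along solutions, as computed after \eqref{eq:morse}.
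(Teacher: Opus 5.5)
Your proposal is correct and follows the same route as the paper, which constructs $\cL_a$ as the graph of the gradient of $F$ on $\X_a$ and takes the product of $\D$ with the trivial Dirac structure, so that $e_\mu=0$ reproduces $\frac{\partial F}{\partial \mu}=0$; the paper treats the coincidence of the $x$-dynamics as immediate, whereas you also justify lifting a solution to a curve $\mu(\cdot)$. For that lift you need neither minimality of the Morse family nor a selection fallback: the rank condition makes $(x,\mu)\mapsto (x,\frac{\partial F}{\partial x}(x,\mu))$ an immersion of the $n$-dimensional manifold $\{\frac{\partial F}{\partial \mu}=0\}$ into the $n$-dimensional $\cL$, hence a local diffeomorphism, which becomes injective after shrinking the neighbourhood on which $F$ is defined, so $\mu(t)$ is as regular as $(x(t),e(t))$.
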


\begin{example}[Optimal control \cite{BarPM15}]\label{singular}
{\rm Consider the optimal control problem of minimizing a cost functional $\int_{t_0}^{t_f}  K(q(t),u(t)) dt$ for a control system $\dot{q}=f(q,u)$, with $q: [t_0,t_f]\to  \mR^n$, $u [t_0,t_f]\to \mR^m$, $q(t_0) \in \mR^n$. Define the \emph{optimal control Hamiltonian}
\begin{equation*}
\mathcal H(q,p,u)=  p^\top f(q,u) - K(q,u) ,
\end{equation*}
with $p \in \mR^n$ the co-state vector. Application of Pontryagin's Maximum principle leads to the consideration of the port-Hamiltonian DAE system on the space with coordinates $(q,p,u)$, given as
\begin{equation}
\label{o1}
\bma I & 0 & 0 \\[2mm] 0 & I & 0 \\[2mm] 0 & 0 & 0 \ema
\begin{bmatrix}
\dot{q} \\[2mm] \dot{p} \\[2mm] \dot{u}
\end{bmatrix} =
\begin{bmatrix}
0 & I_n & 0 \\[2mm]
-I_n & 0 & 0 \\[2mm]
0 & 0 & I_m
\end{bmatrix}
\begin{bmatrix}
\frac{\partial \mathcal H}{\partial q}(q,p,u) \\[2mm]
\frac{\partial \mathcal H}{\partial p}(q,p,u) \\[2mm]
\frac{\partial \mathcal h}{\partial u}(q,p,u)
\end{bmatrix}.
\end{equation}
The underlying Dirac structure is given as
\begin{equation*}
\D =\left \{ \left( \begin{bmatrix} f_q \\[2mm] f_p \\[2mm] f_u \end{bmatrix} , \begin{bmatrix} e_q \\[2mm] e_p \\[2mm] e_u \end{bmatrix}\right) \mid f_q = - e_p, \, f_p = e_q, \, e_u=0\right \} ,
\end{equation*}
i.e., the direct product of the Dirac structure on the $(q,p)$ space given by the graph of the canonical skew-symmetric map $\begin{bmatrix} 0 & -I \\ I & 0 \end{bmatrix}$ from $\begin{bmatrix} e_q \\ e_p \end{bmatrix}$ to $\begin{bmatrix} f_q \\ f_p \end{bmatrix}$, with the trivial Dirac structure $\{(f_u,e_u) \mid e_u=0 \}$. The resulting Dirac algebraic constraints are $\frac{\partial \mathcal H}{\partial u}(q,p,u) =0$.

System \eqref{o1} can be equivalently rewritten as a port-Hamiltonian system only involving the $(q,p)$ variables, with {\it implicit} energy storage relation given by the Lagrangian submanifold
\begin{equation}
\label{o2}
\cL = \left \{\left(
\begin{bmatrix}
q \\[2mm] p
\end{bmatrix},
\begin{bmatrix}
e_q \\[2mm] e_p
\end{bmatrix} \right)
\mid
\exists\, u \mbox{ s.t. }
\begin{bmatrix}
e_q \\[2mm] e_p
\end{bmatrix}  =
\begin{bmatrix}
\frac{\partial \mathcal{H}}{\partial q}(q,p,u) \\[2mm]
\frac{\partial \mathcal{H}}{\partial p}(q,p,u)
\end{bmatrix} , \; \frac{\partial \mathcal{H}}{\partial u}(q,p,u)=0.
\right \}
\end{equation}
Thus, the optimal control Hamiltonian $\mathcal H(q,p,u)$ defines a Morse family for $\cL$ (with $u$ taking the role of the Lagrange multipliers), and the conversion of \eqref{o2} into \eqref{o1} is an example of conversion of a Lagrange algebraic constraint into a Dirac algebraic constraint, see  \cite{SchM20} for details.
}
\end{example}

An important special case of singular Lagrangian submanifolds is the following situation. Consider a Lagrangian submanifold $\cL \subset T^*\X$ with natural coordinates $(x,e)$, with a Morse family of the separable form
\begin{equation*}
F(x,\mu)= \mathcal H(x) + \mu^\top c(x)
\end{equation*}
for some Hamiltonian $\mathcal H:\X \to \mR$ and constraint mapping $c: \X \to \mR^k$. This means that $\cL$ is given as
\begin{equation*}
\cL = \{(x,e) \mid \exists\, \mu \mbox{ s.t. } e= \frac{\partial \mathcal H}{\partial x}(x) + C_\mu(x)\mu,\ c(x)=0 \},
\end{equation*}
where $C_\mu(x):=\frac{\partial c^\top}{\partial x}(x)$ and
$\mu$ are Lagrange multipliers.
Combined with a general Dirac structure $\D$ on $\X$ with coefficient functions $K(x),L(x)$, this leads to a port-Hamiltonian DAE system
\begin{equation}
\label{explicit}
\mat{cc} K(x) & 0 \\ 0 &  0\rix \mat{c} \dot{x} \\ \dot \mu \rix =  \mat{cc} L(x) \frac{\partial \mathcal H}{\partial x}(x) + C_\mu (x) \mu  \\ c(x)\rix,
\end{equation}
or equivalently
\begin{equation*}
\mat{cc} K(x) & 0 \\ 0 &  0\rix \mat{c} \dot{x} \\ \dot \mu \rix = \mat{cc} L(x) & 0 \\ 0 & I \rix \mat{cc} \frac{\partial F}{\partial x}(x,\mu)\\ \frac{\partial F}{\partial \mu}(x,\mu)\rix.
\end{equation*}
A special case of \eqref{explicit} is
\begin{equation}
\label{explicit2}
\mat{cc} I & 0 \\ 0 &  0\rix \mat{c} \dot{x} \\ \dot \mu \rix =  \mat{cc} J(x) \frac{\partial \mathcal H}{\partial x}(x) + C_{\mu} (x) \mu  \\ c(x)\rix,
\end{equation}
where $J(x)=-J^\top (x)$,
and we obtain the following energy balance
\begin{equation}
\begin{array}{rcl}
\frac{d}{dt} \mathcal H &=&
\dot{x}^\top \frac{\partial \mathcal H}{\partial x}(x)  = - \left[\frac{\partial\mathcal  H}{\partial x}(x) + C_\mu (x) \mu \right]^\top J(x) \frac{\partial \mathcal H}{\partial x}(x)\\[2mm]
&=&
- \mu^\top C^\top_\mu (x) J(x) \frac{\partial \mathcal H}{\partial x}(x).
\end{array}
\label{balance}
\end{equation}
On the other hand $\mu$ is chosen such that the constraint $c(x)=0$ remains satisfied. Hence by differentiation, $\mu$ satisfies
\begin{eqnarray*}
0&=& C_\mu ^\top (x) \dot{x} = C_\mu^\top (x) J(x) \left[\frac{\partial \mathcal H}{\partial x}(x) + C_\mu(x) \mu \right]\\ &=& C_\mu ^\top (x) J(x) \frac{\partial \mathcal H}{\partial x}(x) + C_\mu^\top (x) J(x)C_\mu(x) \mu.
\end{eqnarray*}
Together with \eqref{balance} this yields energy conservation, i.e.,
\begin{equation*}
\frac{d}{dt} \mathcal H = - \mu^\top C_\mu^\top (x) J(x) C_\mu(x) \mu =0.
\end{equation*}
A special case
of \eqref{explicit2}
are mechanical systems with position constraints $g(q)=0$, where $x=(q,p)$. This will be discussed in the next section.

\begin{remark}
{\rm
The developments in this subsection also form the starting point for the theory of \emph{Dirac brackets}, as developed in the 1950s by Paul Dirac, see e.g. \cite{Dir58}. In this context the constraints $c(x)=0$ arise from non-surjectivity of the map $(q,\dot{q}) \mapsto (q, p= \frac{\partial l}{\partial \dot{q}}(q,\dot{q}))$ from $TQ$ to $T^*Q$, for a singular Lagrangian function $L(q,\dot{q})$. This means that the Hamiltonian is $\mathcal H(x) + \mu^\top c(x)$, where $\mu$ is undetermined. By possibly differentiating the \emph{primary} constraints $c(x)=0$ to \emph{secondary} constraints one is led to a constraint submanifold of the state space $\X = T^*Q$. Under a regularity assumption (the decoupling matrix has full rank), the system \eqref{explicit2} restricted to this constraint manifold defines a unique Hamiltonian vector field  with respect to a Poisson bracket that is called the Dirac bracket. See \cite{NvdS90} (Chapter 12) for details.
}
\end{remark}

\section{Nonlinear mechanical systems}\label{sec:nlmech}
In this section we will look specifically at algebraic constraints that occur in nonlinear mechanical systems. This includes primarily the study of geometric and kinematic constraints, but also the classical (although) relatively unknown) Livens principle.

\subsection{Hamilton's equations and Livens principle}
A standard way to derive the equations of motion of a mechanical system is to start from a Lagrangian function $L(q,\dot{q})$, where $q$ are the configuration coordinates of the mechanical system and $\dot{q}$ the corresponding generalized velocities. Using variational calculus for optimizing the \emph{action} integral $\int L(q(t),\dot{q}(t)) dt$ one derives the \emph{Euler-Lagrange equations}
\begin{equation}
\label{EL}
\frac{d}{dt} \frac{\partial L} {\partial \dot{q}}(q,\dot{q}) = \frac{\partial L}{\partial q}(q,\dot{q})
\end{equation}
describing the second-order equations of motion.
Next one defines the Hamiltonian function $\mathcal H(q,p)$ as the \emph{partial Legendre transform}, see e.g. \cite{Arn78}, (assuming it exists) of $L(q,\dot{q})$ with respect to $\dot{q}$, with $p=\frac{\partial L}{\partial \dot{q}}$ the vector of \emph{momenta}. As a result, the Euler-Lagrange equations \eqref{EL} transform into the classical \emph{Hamiltonian equations}
\begin{equation}
 \begin{bmatrix}  \dot q \\ \dot p \end{bmatrix} = \begin{bmatrix}  0 & I \\ -I & 0 \end{bmatrix}\begin{bmatrix}  \frac{\partial \mathcal H}{\partial q}(q,p) \\  \frac{\partial \mathcal H}{\partial p}(q,p) \end{bmatrix}. \label{mechclassical}
\end{equation}
For example, if $L(q,\dot{q}) = \frac12  \dot{q}^\top M(q)
\dot{q}- V(q)$ (kinetic minus potential energy), for some invertible mass matrix $M(q)$, then the Hamiltonian is given as as the total energy
$\mathcal H(q,p)=\frac12  p^\top M^{-1}(q) p +V(q)$.

An alternative view is provided by the classical \emph{Livens principle} \cite{Liv20}, see e.g. \cite{KinBS23}. This avoids the explicit Legendre transform by introducing the velocity $v$ as an \emph{extra} variable, and considering the \emph{augmented Lagrangian}
\[
L_1(q,v) :=L(q,v) + p^\top (\dot{q} -v).
\]
Thus $\dot{q}$ in the Lagrangian $L(q,\dot{q})$ is replaced by $v$, while interpreting the equality $v=\dot{q}$ as a \emph{constraint} $\dot{q}-v=0$ with Lagrange multiplier $p$. Application of variational calculus to this augmented Lagrangian $L_1$ (and exploiting affine dependence on $\dot{q}$) yields the following system
\begin{equation}
\begin{bmatrix} I & 0 & 0 \\ 0 & I & 0 \\ 0 & 0 & 0 \end{bmatrix}
 \begin{bmatrix}  \dot q \\ \dot p \\ \dot v \end{bmatrix} = \begin{bmatrix} 0 & I & 0 \\ -I & 0 & 0 \\ 0 & 0 & I\end{bmatrix}\begin{bmatrix}  \frac{\partial \mathcal H_1}{\partial q} \\  \frac{\partial \mathcal H_1}{\partial p} \\ \frac{\partial \mathcal H_1}{\partial v} \end{bmatrix}, \label{mechlivens}
\end{equation}
in the extended set of state variables $q,p,v$, with Hamiltonian
\begin{equation*}
\mathcal H_1 (q,p,v):= p^\top v - L(q,v).
\end{equation*}
This constitutes a port-Hamiltonian DAE system with Dirac algebraic constraints
$0=\frac{\partial \mathcal H_1}{\partial v}(q,v)=p - \frac{\partial L}{\partial v}$, which are replacing the \emph{definition} $p=\frac{\partial L}{\partial \dot{q}}$ in the standard approach.
If we can solve these algebraic constraints for $v=\tilde v(q,p)$, then we are back to the standard case \eqref{mechclassical}, with Hamiltonian $\mathcal H(q,p)=\mathcal H_1(q,\tilde v(q,p),p)$. However, solvability of the algebraic constraints (or, equivalently, existence of the partial Legendre transform of $L$) is \emph{not} required for deriving \eqref{mechlivens}.
\begin{remark}
{\rm The
Livens principle can be regarded as a special case of the Pontryagin maximum principle \cite{BolGMP62} in optimal control (as was already indicated in \cite{KinBS23}). In fact, consider the optimal control problem of minimizing a cost criterion $\int K(x(t),u(t) dt$ with respect to dynamics $\dot{x}=f(x,u)$; see Example \ref{singular}. Then the first-order necessary conditions of Pontryagin's maximum principle amount to the port-Hamiltonian DAE system
\begin{equation}
\label{optimal}
\begin{bmatrix} I & 0 & 0 \\ 0 & I & 0 \\ 0 & 0 & 0 \end{bmatrix}
 \bma \dot{x} \\ \dot{p} \\ \dot{u} \ema
 = \begin{bmatrix} 0 & I & 0 \\ -I & 0 & 0 \\ 0 & 0 & I\end{bmatrix}
 \begin{bmatrix}  \frac{\partial \mathcal H_{opt}}{\partial q} \\  \frac{\partial \mathcal H_{opt}}{\partial p} \\ \frac{\partial \mathcal H_{opt}}{\partial u} \end{bmatrix},
 \end{equation}
with $\mathcal H_{opt}(x,u,p) := p^\top f(x,u) - L(x,u)$ being the \emph{optimal control Hamiltonian}. Thus, seen from this angle the Livens principle corresponds to the special case $f(x,u)=u$. All this is closely related to the observation made in \cite{suss97}, where \eqref{optimal} is referred to as 'Hamilton's equations as he should have written them'. Said otherwise, from our point of view 'Hamilton's equations are actually port-Hamiltonian DAEs'.
}
\end{remark}

\subsection{Mechanical systems with geometric constraints}
Next consider a nonlinear mechanical system with Hamiltonian $\mathcal H$ as in \eqref{mechclassical}, and with \emph{geometric constraints} $g(q)=0$ for some mapping $g: \mathcal Q \to \mR^k$, where $\mathcal Q$ is the state space or manifold of the $q$ vectors. The equations of motion are given as
\begin{equation}
\label{mechgeoml}
\begin{array}{rcl}
\dot q &=& \frac{\partial \mathcal H}{\partial p}(q,p), \\
\dot p &=&-\frac{\partial  \mathcal H}{\partial q}(q,p) - G_\mu(q)  \mu,  \\
0 & = &  g(q),
\end{array}
\end{equation}
with the matrix function $G_\mu(q)=\frac{ \partial g^\top}{\partial q}(q)$ defining the \emph{constraint forces} $G_\mu(q)  \mu$, where  $\mu$ are appropriate Lagrange multipiers. This constitutes a port-Hamiltonian DAE system with state variables $q,p$ and \emph{Lagrange} algebraic constraints $g(q)=0$. Indeed, the submanifold
\begin{equation}
\mathcal{L}:= \{(q,p,e_q,e_p) \mid g(q)=0, e_q-\frac{\partial \mathcal H}{\partial q}(q,p) \in \Img G(q), e_p= \frac{\partial \mathcal H}{\partial p}(q,p) \}
\end{equation}
is a Lagrangian submanifold of $T^*\X$, where $\X=T^*\mathcal Q$ is the state space with natural coordinates $q,p$. Note that the Dirac structure $\D$ is simply given as the graph of $-J= \bma 0 & -I \\ I & 0 \ema$.

The Lagrange algebraic constraints are converted into Dirac algebraic constraints by introducing the Lagrange multipliers $\mu$ and considering the Morse family
\[
\mathcal H_2(p,q,\mu) :=\mathcal H(q,p)+\mu^\top g(q).
\]
Then \eqref{mechgeoml} is rewritten as the augmented port-Hamiltonian DAE system
\begin{equation}
\begin{bmatrix} I & 0 & 0 \\ 0 & I & 0 \\ 0 & 0 & 0\end{bmatrix}
 \begin{bmatrix}  \dot q \\ \dot p \\ \dot \mu \end{bmatrix} = \begin{bmatrix} 0 & I & 0 \\ -I & 0 & 0 \\ 0 & 0 & I\end{bmatrix}\begin{bmatrix}  \frac{\partial \mathcal H_2}{\partial q} \\  \frac{\partial \mathcal H_2}{\partial p} \\ \frac{\partial \mathcal H_2}{\partial \mu} \end{bmatrix},   \label{mechmorse}
\end{equation}
with \emph{Dirac} algebraic constraints $\frac{\partial \mathcal H_2}{\partial \mu} =- g(q)=0$. These equations can be more explicitly rewritten as
\begin{equation*}
\begin{bmatrix} I & 0 & 0 \\ 0 & I & 0 \\ 0 & 0 & 0\end{bmatrix}
 \begin{bmatrix}  \dot q \\ \dot p \\ \dot \mu \end{bmatrix} = \begin{bmatrix} 0 & I & 0 \\ -I & 0 & 0 \\ 0 & 0 & I\end{bmatrix}\begin{bmatrix}  \frac{\partial \mathcal H}{\partial q}(q,p) \\  \frac{\partial \mathcal H}{\partial p}(q,p) \\ g(q) \end{bmatrix}  - \bma 0 \\ G_\mu(q) \\ 0 \ema \mu 
\end{equation*}
thus indeed recovering \eqref{mechgeoml}.
This port-Hamiltonian DAE formulation of mechanical systems with geometric constraints can be \emph{combined} with Livens principle by considering the further augmented set of state variables $q,p,v,\mu$, and the Hamiltonian
\[
\mathcal H_4(q,p,v,\mu) :=\mathcal H_1(q,p,v) + \mu^\top g(q)= p^\top v - L(q,v) + \mu^\top g(q).
\]
The resulting port-Hamiltonian formulation takes the form
\begin{equation*}
\begin{bmatrix} I & 0 & 0 &0\\ 0& I& 0& 0\\ 0 &0 & 0 & 0 \\ 0 & 0 & 0&0\end{bmatrix}
 \begin{bmatrix}  \dot q \\  \dot p\\ \dot v \\ \dot \mu \end{bmatrix} = \begin{bmatrix}0&  I & 0 & 0 \\ -I & 0 & 0 & 0 \\ 0& 0 & I & 0\\ 0 & 0 & 0 &I\end{bmatrix}\begin{bmatrix}  \frac{\partial \mathcal H_4}{\partial q} \\  \frac{\partial \mathcal H_4}{\partial p}\\ \frac{\partial \mathcal H_4}{\partial  v} \\ \frac{\partial \mathcal H_4}{\partial \mu} \end{bmatrix}. 
\end{equation*}
%
This leads to two sets of Dirac algebraic constraints, namely $\frac{\partial \mathcal H_1}{\partial v}(q,v)= p - \frac{\partial L}{\partial v}=0$ and $g(q)=0$.
Solving (if possible!) the Livens algebraic constraint $\frac{\partial \mathcal H_1}{\partial v}(q,v)= p - \frac{\partial L}{\partial v}=0$ for $v$ leads back to \eqref{mechmorse}.

\subsection{Kinematic constraints }
Kinematic constraints for mechanical systems are of the form $A^\top (q)\dot{q}=0$ for some $n \times k$ matrix function $A(q)$. In this case, applying d'Alembert's principle, \eqref{mechclassical} extends to the port-Hamiltonian DAE system
\begin{equation}
\begin{array}{rcl}
 \begin{bmatrix}  \dot q \\ \dot p \end{bmatrix} &=& \begin{bmatrix}  0 & I \\ -I & 0 \end{bmatrix}\begin{bmatrix}  \frac{\partial \mathcal H}{\partial q}(q,p) \\  \frac{\partial \mathcal H}{\partial p}(q,p) \end{bmatrix} - \bma 0 \\ A(q) \ema e_\lambda,  \\[5mm]
 0 & = & A^\top (q) \frac{\partial H}{\partial p}(q,p),
 \end{array}
 \label{mechkinematic}
\end{equation}
with $A(q) e_\lambda$ the vector of \emph{constraint forces}, and $\mathcal H(q,p)$ the energy as in \eqref{mechclassical}. Here, the last line represents Dirac algebraic constraints resulting from the singular Dirac structure
\begin{equation*}
\begin{array}{l}
\mathcal D= \left \{(f_q,f_p,e_q,e_p) \mid \exists\, e_\lambda \mbox{ s.t. } \bma f_q \\ f_p \ema = \bma 0 & -I \\ I & 0 \ema \bma e_q \\ e_p \ema - \bma 0 \\ A(q) \ema e_\lambda, \right .\\ \qquad \qquad \left. A^\top (q) e_p=0 \right \}.
\end{array}
\end{equation*}
As before, these Dirac algebraic constraints can be converted into Lagrange algebraic constraints by extending the state to $(q,p, \lambda)$, with $\dim \lambda = \dim e_\lambda =k$. The augmented Dirac structure is given as the graph of the skew-symmetric map
\[
J_a(q) := \bma 0 & -I & 0 \\ I & 0 & A(q) \\ 0 & -A^\top(q) & 0 \ema.
\]
Furthermore, the singular augmented Lagrangian submanifold $\cL_a$ is given as
\[
\cL_a= \{(q,p, \lambda , e_q,e_p, e_{\lambda}) \mid \lambda=0,e_q=\frac{\partial \mathcal H}{\partial q}(q,p), e_p= \frac{\partial \mathcal H}{\partial p}(q,p) \}.
\]
The resulting augmented port-Hamiltonian DAE system takes the form
\begin{equation*}
\bma I & 0 & 0 \\ 0 & I & 0 \\ 0 & 0 & 0 \ema
\bma \dot{q} \\ \dot{p} \\ \dot{\lambda} \ema =
\bma 0 & I & 0 \\ -I & 0 & -A(q) \\ 0 & A^\top(q) & 0 \ema
\bma \frac{\partial \mathcal H_a}{\partial q}(q,p, \lambda) \\ \frac{\partial \mathcal H_a}{\partial p}(q,p,\lambda) \\ \frac{\partial \mathcal H_a}{\partial \lambda}(q,p, \lambda) \ema,
\end{equation*}
where $\mathcal H_a(q,p,\lambda) = \mathcal H(q,p) + \frac 12 \lambda^\top \lambda$.

Finally, the treatment of kinematic constraints can be also combined with the Livens principle. This is done by adding variables $v$ and replacing the Hamiltonian $\mathcal H (q,p)$ as in \eqref{mechclassical} by the Hamiltonian $\mathcal H_1(q,p,v)= p^\top v - L(q,v)$ introduced before. This results in the following port-Hamiltonian DAE system with state variables $(q,p,v)$ given as
\begin{equation*}
\begin{bmatrix} I & 0 & 0 \\ 0 & I & 0 \\ 0 & 0 & 0 \end{bmatrix}
 \begin{bmatrix}  \dot q \\ \dot p \\ \dot v \end{bmatrix} = \begin{bmatrix} 0 & I & 0 \\ -I & 0 & 0 \\ 0 & 0 & I\end{bmatrix}\begin{bmatrix}  \frac{\partial \mathcal H_1}{\partial q} \\  \frac{\partial \mathcal H_1}{\partial p} \\ \frac{\partial \mathcal H_1}{\partial v} \end{bmatrix} - \bma 0 \\ A(q) \\ 0 \ema e_\lambda, \;0= A^\top(q) \frac{\partial \mathcal H_1}{\partial p}(q,p).
\end{equation*}
This system has, on the one hand, Dirac algebraic constraints of the form $0= A^\top (q) \frac{\partial \mathcal H_1}{\partial p}(q,p)$, and, on the other hand, Lagrange algebraic constraints $0=\frac{\partial \mathcal H_1}{\partial v}(q,p)$.

\subsection{Differentiating geometric to kinematic constraints}
Another way that kinematic constraints arise is by  differentiating  geometric constraints $g(q)=0$  with respect to time, resulting in kinematic constraints $G_\lambda^\top(q) \dot{q}=0$, where $G_\lambda(q):= \frac{\partial g^\top}{\partial q}(q)$. These kinematic constraints are of a special type; classically called \emph{holonomic constraints}. (Conversely, kinematic constraints $A^\top(q)\dot{q}=0$ for which there does \emph{not} exist an invertible matrix $U(q)$ and map $g(q)$ such that $A(q)U(q)= \frac{\partial g^\top}{\partial q}(q)$ are called \emph{non-holonomic}.)

Of course, by differentiating geometric constraints $g(q)=0$ to kinematic constraints $G_\lambda^\top(q) \dot{q}=0$ one loses information. An option is to \emph{retain} the geometric constraints, next to the kinematic constraints given by the differentiated geometric constraints. This combines \eqref{mechgeoml} and \eqref{mechkinematic}, and thus leads to the port-Hamiltonian DAE system
\begin{equation*}
\begin{array}{rcl}
 \begin{bmatrix}  \dot q \\ \dot p  \end{bmatrix} &=& \begin{bmatrix} 0 & I  \\ -I & 0  \end{bmatrix}
 \begin{bmatrix}  \frac{\partial \mathcal H}{\partial q}(q,p) \\  \frac{\partial \mathcal H}{\partial p}(q,p) \end{bmatrix}  + \bma 0 \\ G_\mu(q)  \ema \mu + \bma 0 \\ G_\lambda(q)  \ema e_\lambda, \\[2mm]
 0 & = & g(q), \\[2mm]
 0 & = &G^\top_\lambda (q) \frac{\partial \mathcal H}{\partial p}(q,p).
 \end{array}
\end{equation*}
However, since $G_\lambda(q)= G_\mu(q)=:G(q)$, the two constraint forces can be taken together as the \emph{single} constraint force $G(q)(\mu + e_\lambda)$.

Alternatively, a classical approach in the numerical solution of mechanical systems with geometric constraints is to interpret the differentiated geometric constraints
$G_\lambda^\top (q)\frac{\partial \mathcal H}{\partial  p}(q,p)=:g^v(q,p)$ as new 'geometric' constraints $g^v(q,p)=0$ in the $q,p$ variables. From a port-Hamiltonian perspective this means that we do not regard the differentiated geometric constraints as kinematic constraints $G_\lambda^\top (q) \dot q=0$ (and thus as Dirac algebraic constraints), but instead as \emph{Lagrange} algebraic constraints, directly constraining the state variables $q,p$ via another Lagrange multiplier $\nu$. This is the common approach in the Gear-Gupta-Leimkuhler index reduction \cite{GeaLG85} and has been shown to be a variation of the Livens principle in \cite{KinBS23}.
It is associated with an extended Morse family $\mathcal H_5(q,v,p,\mu,\nu):=\mathcal H_1(q,v,p)+\mu^\top g(q)  + \nu^\top g^v(q,p)$, and leads to the port-Hamiltonian DAE system
\begin{equation*}
\begin{bmatrix} I & 0 & 0 &0& 0\\ 0& I& 0& 0& 0\\ 0 &0 & 0 & 0 & 0 \\ 0 & 0 &0 & 0&0\\ 0 & 0 &0 & 0&0\end{bmatrix}
 \begin{bmatrix}  \dot q \\ \dot p \\ \dot v\\ \dot \mu \\\dot \nu \end{bmatrix} = \begin{bmatrix}0 & I & 0 & 0 & 0 \\ -I&  0 & 0 & 0 & 0 \\ 0& 0 & I & 0 & 0 \\ 0 & 0 & 0 &I & 0\\ 0 & 0 & 0 &0 &I \end{bmatrix}\begin{bmatrix}  \frac{\partial \mathcal H_5}{\partial q} \\  \frac{\partial \mathcal H_5}{\partial p}\\ \frac{\partial \mathcal H_5}{\partial  v} \\ \frac{\partial \mathcal H_5}{\partial \mu} \\ \frac{\partial \mathcal H_5}{\partial \nu }\end{bmatrix},
\end{equation*}
where $ \frac{\partial \mathcal H_5}{\partial q} = -\frac{\partial L}{\partial q} +G_(q)\mu +
\frac{\partial  g^v(q,p)}{\partial q}^\top \nu $, $ \frac{\partial \mathcal H_5}{\partial  p} = v+ \frac{\partial }{\partial p}(g^v(q,p)^\top \nu)$,
$\frac{\partial \mathcal H_5}{\partial v}=p-\frac{\partial L}{\partial v}$,
$ \frac{\partial \mathcal H_5}{\partial  \mu}=g(q)$, and
$ \frac{\partial \mathcal H_5}{\partial \nu}=g^v(q,p)$.

Substitution of $\dot q=v+\frac{\partial }{\partial p} g^v(q,p)^\top \nu $ yields
\begin{equation}\label{starstar}
G^\top(q)(v+\frac{\partial }{\partial q} g^v(q,p)^\top \nu)=0.
\end{equation}
By using $g^v(q,p)=G^\top(q)\frac{\partial \mathcal H}{\partial  p}(q,p) =0$, then \eqref{starstar}
 implies that
 \[
 G_\mu^\top(q)\frac{\partial }{\partial p} (G_\mu^\top(q)\frac{\partial \mathcal H(q,p)}{\partial p})^\top \nu)=0,
 \]
 or equivalently
\[
 G_\mu^\top(q)\frac{\partial^2 \mathcal H(q,p)}{\partial p^2}  G_\mu(q)\nu=0.
 \]
 If $G_\mu(q)$ has full rank and $\frac{\partial^2 \mathcal H(q,p)}{\partial p^2}$ is invertible then this implies that actually $ \nu=0$; see \cite{KinBS23}.

Thus, from a mathematical modeling point of view, the inclusion of the differentiated geometric constraints as additional Lagrange constraints $0=g^v(q,p)$ does not change the dynamics. However, in numerical integration methods these additional constraints \emph{can} be used to guarantee that the (hidden) kinematic constraints are satisfied, by checking the deviation of the associated Lagrange multiplier $\nu$ from being $0$.

\section{Port-Hamiltonian DAE systems with dissipation and ports.}\label{sec:nldiss}
In the previous sections we have considered port-Hamiltonian DAE systems that are \emph {energy-preserving}, and \emph{without external ports} (i.e., no inputs and outputs). External ports are immediately included by extending the Dirac structure; see later. In order to deal with energy-dissipation, the standard way in port-Hamiltonian theory is to first extend the Dirac structure to include a port linking to energy-dissipating elements, and then to \emph{close} this port by an \emph{energy-dissipation relation}. Together this leads, see \cite{SchM20}, to the following definition of dissipative port-Hamiltonian DAE systems.

\begin{definition} A nonlinear dissipative port-Hamiltonian DAE system is defined by a triple $(\D,\cL,\cR)$, where
\begin{enumerate}
    \item $\D$ is a Dirac structure
\begin{equation*} \label{diracstruct}
\D (x) \subset T_x\X \times T^*_x \X \times \F_R \times \E_R \times \F_P \times \E_P, \quad x \in \X,
\end{equation*}
with $\X$ the state space manifold, $\F_R$ and $\E_R=\F_R^*$ are dual flow and effort linear spaces corresponding to energy dissipation, and $\F_P$ and $\E_P=\F_P^*$ are dual flow and effort linear spaces corresponding to external ports,
\item
$\cL$ is a Lagrangian submanifold
\begin{equation*} \label{lagrangestruct}
\cL \subset T^*\X
\end{equation*}
corresponding to energy storage, and
\item
$\cR$ is a resistive relation defined by a submanifold
\begin{equation*} \label{resistiverelation}
\cR \subset \F_R \times \E_R, \quad \mbox{ such that }e_R^\top f_R \geq 0,\ \mbox{ for all}\ (f_R,e_R) \in \cR,
\end{equation*}
with $\dim \cR = \dim \F_R$.
\end{enumerate}
The resulting port-Hamiltonian DAE system is defined by the equations
\begin{equation}
\label{pHDAE}
\begin{array}{c}
(-\dot{x}(t), e(t), f_R(t),e_R(t),f_P(t),e_P(t)) \in \D (x(t)), \\[2mm]
(-f_R(t),e_R(t)) \in \cR, \quad (x(t),e(t)) \in \cL, \; t \in \mR.
\end{array}
\end{equation}

\end{definition}
Note that the description of the dynamics \eqref{pHDAE} involves the intermediate variables $f_R,e_R$ satisfying $(-f_R(t),e_R(t)) \in \cR$. In order to obtain an \emph{explicit} description in terms of only the state variables $x$ (and their time-derivatives $\dot{x}$) and the external port variables $f_P,e_P$ these variables need to be eliminated.

In order to do so, we will look closely at the \emph{composition} of the Dirac structure $\D$ and the resistive relation $\cR$. We  again first study the linear case.

%
%

\subsection{The linear case}
In this subsection we first recall from \cite{SchM23} how in the linear case the variables $f_R,e_R$ are eliminated by considering the composition of the Dirac structure $\D$ and the resistive structure $\cR$ via the resistive port variables $f_R,e_R$. For simplicity of exposition we first consider the case without external port variables $f_P,e_P$.

Throughout this subsection, we will assume that the resistive structure $\cR$ is given a by a \emph{nonnegative Lagrange structure} $\cR \subset \F_R \times \E_R$. The composition of $\cR$ with a Dirac structure $\D \subset \X \times \X^* \times \F_R \times \E_R$ via the resistive port variables $(f_R,e_R) \in \F_R \times \E_R$ is then defined as
\[
\D \circ \cR := \{(f,e) \in \X \times \X^* \mid \exists\, (-f_R,e_R) \in \cR \mbox{ s.t. } (f,e,f_R,e_R) \in \D \}.
\]
As we will demonstrate now, such a composition can be fully characterized as a \emph{maximally monotone} structure $\M \subset \X \times \X^*$. Conversely any maximally monotone structure $\M \subset \X \times \X^*$ can be written as $\M=\D \circ \cR$ for a certain choice of $\F_R,\E_R$, a Dirac structure $\D \subset \X \times \X^* \times \F_R \times \E_R$ and resistive structure $\cR \subset \F_R \times \E_R$.

For this analysis, let us first recall the definition of a maximally monotone structure.
\begin{definition}
Let $\X$ be a finite-dimensional linear space. A subspace $\M \subset \X \times \X^*$ is called a \emph{monotone structure} if the quadratic form defined by the bilinear form $\langle \cdot, \cdot \rangle_+$ is nonnegative on $\M$. It is a maximally monotone structure if moreover $\M$ is maximal with respect to this property, i.e., there does not exist another monotone structure $\M'$ with $M \subsetneqq \M'$.
\end{definition}
Note that Dirac structures are a special type of maximally monotone structures, namely maximal subspaces for which $\langle \cdot, \cdot \rangle_+$ is \emph{zero} restricted to this subspace. In general, we first recall the following proposition from \cite{SchM23}. Note that a linear map $M: \X^* \to \X$ is called {\emph monotone} if $M+M^\top \geq 0$ or, equivalently, $e^\top Me \geq 0$ for all $e \in \X^*$.
\begin{proposition}
\label{prop:monotone0}
A subspace $\M \subset \X \times \X^*$ is a monotone structure if and only if
\begin{equation*}
\label{mono}
<e \mid f>\,  \geq 0
\end{equation*}
for all $(f,e) \in \M$, and a maximally monotone structure if additionally $\M$ is maximal with respect to this property. Any monotone structure $\M$ satisfies $\dim \M \leq \dim \X$ and is a maximally monotone structure if and only if $\dim \M = \dim \X$.
Furthermore, the graph $\{(Me,e) \in \X \times \X^* \mid e \in \X^* \}$ of a linear map
$M: \X^* \to \X$ is a maximally monotone structure if and only if $M$ is monotone.
\end{proposition}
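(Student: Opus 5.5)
Four claims need proof: the reformulation of monotonicity via $\langle e \mid f\rangle \geq 0$, the dimension bound $\dim \M \le \dim \X$, the characterization of maximality by $\dim \M = \dim\X$, and the statement about graphs. The first is immediate from the definition, since the quadratic form of $\langle\cdot,\cdot\rangle_+$ at $(f,e)$ equals $2\langle e\mid f\rangle$; this is the same observation made after Definition~\ref{def:dirlag} for Dirac structures. Monotone and maximally monotone structures are therefore exactly the subspaces (respectively maximal subspaces) on which $\langle e\mid f\rangle \ge 0$.

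\emph{Dimension bound.} I will use the signature of $\Sigma_+$. On $\X\times\X^* \cong \mR^{2n}$ the form $\Sigma_+$ has $n$ positive and $n$ negative eigenvalues. Its negative eigenspace is $\mathcal N = \{(f,-f)\}$, where we identify $\X^*\cong\X$ through coordinates, and the form is negative definite on $\mathcal N$. A subspace $\M$ on which the form is nonnegative satisfies $\M\cap\mathcal N=\{0\}$: any nonzero vector in the intersection would have strictly negative value. Hence $\dim\M + n \le 2n$, that is, $\dim\M\le n$.

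\emph{Maximality and dimension.} If $\dim\M=n$, any monotone $\M'\supseteq\M$ also has dimension at most $n$, so $\M'=\M$ and $\M$ is maximal. The converse, that a maximal $\M$ has dimension $n$, is the main obstacle: I must show that a monotone $\M$ with $\dim\M<n$ can be enlarged. I would argue as follows. Since $\M\cap\mathcal N=\{0\}$ and $\Sigma_+\le 0$ on $\mathcal N$, the restriction to $\M$ of the orthogonal projection onto the positive eigenspace $\mathcal P=\{(f,f)\}$ along $\mathcal N$ is injective. Therefore $\M$ is the graph of a linear map $T:\mathcal P_0\to\mathcal N$ defined on a subspace $\mathcal P_0\subset\mathcal P$ with $\dim \mathcal P_0 = \dim \M$. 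In the Euclidean norm adapted to the eigenvalues $\pm1$, monotonicity says exactly that $\|Tp\|\le\|p\|$. Any contraction $T$ defined on $\mathcal P_0$ extends to a contraction on all of $\mathcal P$, for instance by setting it to zero on the orthogonal complement of $\mathcal P_0$: then $\|T(p_0+p_1)\|=\|Tp_0\|\le\|p_0\|\le\|p_0+p_1\|$. The graph of the extended map is a monotone subspace of dimension $n$ that contains $\M$. So if $\M$ is maximal, then $\dim\M=n$.

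\emph{Graphs.} The graph of $M:\X^*\to\X$ has dimension $n$. It is monotone if and only if $\langle e\mid Me\rangle=e^\top Me\ge0$ for all $e$, which is equivalent to $M+M^\top\ge0$. By the previous step, the graph is then maximally monotone exactly when $M$ is monotone.
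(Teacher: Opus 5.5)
Your proof is correct and complete. The paper gives no proof of this proposition; it only recalls it from \cite{SchM23}, so there is no in-paper argument to compare yours with.

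The first and last claims are indeed routine. You correctly identify the only substantive point: a maximal monotone subspace must have dimension $n$. You handle it in three steps: split $\X\times\X^*$ into the $\pm1$ eigenspaces $\mathcal P,\mathcal N$ of $\Sigma_+$; write $\M$ as the graph of a contraction $T:\mathcal P_0\to\mathcal N$; then extend $T$ by zero on the orthogonal complement of $\mathcal P_0$ in $\mathcal P$. This is the classical angular-operator argument from indefinite inner product spaces.

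The alternative would be to cite the inertia result that maximal nonnegative subspaces of a form with $n$ positive and $n$ negative eigenvalues have dimension $n$. Compared with that, your route is self-contained and constructive, since it produces an explicit maximal monotone extension. It also yields more: the maximally monotone structures are exactly the graphs of contractions $\mathcal P\to\mathcal N$.

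The dimension characterization you prove is also what justifies the image-representation criterion \eqref{maxmon}, which the paper later states as ``easily seen''.
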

The following proposition from \cite{SchM23} formulates the equivalence between the composition of Dirac structures with resistive structures and maximally monotone structures.

\begin{proposition}
\label{prop:decompose}
Consider linear spaces $\F_R$ and $\E_R=\F_R^*$ such that $\M = \D \circ \cR$ for some  Dirac structure $\D \subset \X \times \X^* \times \F_R \times \E_R$ and a nonnegative Lagrange structure $\cR \subset \F_R \times \E_R$. Then $\M$ is a maximally monotone structure.

Conversely, let $\M \subset \X \times \X^*$ be a maximally monotone structure. Then there exist linear spaces $\F_R$ and $\E_R=\F_R^*$ such that $\M = \D \circ \cR$ for some  Dirac structure $\D \subset \X \times \X^* \times \F_R \times \E_R$ and some nonnegative Lagrange structure $\cR \subset \F_R \times \E_R$.
\end{proposition}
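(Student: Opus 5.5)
For the first implication I will verify two things: that $\M=\D\circ\cR$ is monotone, and that $\dim\M=\dim\X$. Maximality then follows from Proposition~\ref{prop:monotone0}.

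\emph{Monotonicity.} Take $(f,e)\in\M$, so there is $(-f_R,e_R)\in\cR$ with $(f,e,f_R,e_R)\in\D$. Since $\D$ is a Dirac structure,
\[
\langle e\mid f\rangle+\langle e_R\mid f_R\rangle=0 .
\]
Since $\cR$ is a nonnegative Lagrange structure, $\langle e_R\mid -f_R\rangle\ge 0$. Hence $\langle e\mid f\rangle=\langle e_R\mid -f_R\rangle\ge 0$.

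\emph{Dimension.} Proposition~\ref{prop:monotone0} already gives $\dim\M\le\dim\X$, so only the reverse inequality is needed. Write $n=\dim\X$ and $m=\dim\F_R$. I will count dimensions via the fibre product
\[
W=\{(f,e,f_R,e_R)\in\D \mid (-f_R,e_R)\in\cR\}.
\]
This is the intersection of $\D$ (dimension $n+m$) with $\X\times\X^*\times\tilde\cR$, where $\tilde\cR$ is $\cR$ with the sign of $f_R$ flipped. The second space has dimension $2n+m$, and the ambient space has dimension $2n+2m$. So $\dim W\ge (n+m)+(2n+m)-(2n+2m)=n$.

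The map $W\to\M$ forgetting $(f_R,e_R)$ is onto, with kernel
\[
W_0=\{(0,0,f_R,e_R)\in\D \mid (-f_R,e_R)\in\cR\}.
\]
Thus $\dim\M=\dim W-\dim W_0\ge n-\dim W_0$. The main obstacle is showing that $W_0$ does not spoil the count. Here I would use two orthogonality facts.

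The first is the identity $\D^{\perp_+}=\D$. Write $\D_0=\{(f_R,e_R)\mid (0,0,f_R,e_R)\in\D\}$ and let $\D_\pi$ be the projection of $\D$ onto $\F_R\times\E_R$. Then $\D_0$ is the $\langle\cdot,\cdot\rangle_+$-orthogonal complement of $\D_\pi$ in $\F_R\times\E_R$.

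The second is that $\tilde\cR$ is Lagrangian (indeed, the sign flip maps Lagrange structures to Lagrange structures). Hence the set of $(-f_R,e_R)$ arising from $W_0$ is $\D_0\cap\tilde\cR$.

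A cleaner route is to avoid computing $\dim W_0$ and instead count the image directly. Either show $\M^{\perp_+}\subset\M$-type relations, or exhibit injectivity modulo a complementary count. The cleanest version I would try is to show that $\M$ is \emph{maximal} directly. Suppose $(f',e')$ satisfies $\langle e'\mid f\rangle+\langle e\mid f'\rangle\ge$-type nonnegativity against all of $\M$. By duality (the paper's standard composition argument for Dirac structures), the relevant $W_0$-part splits off as a subspace contained in both $\D$ and $\tilde\cR$. Its contribution to $\dim W$ is then compensated, because on $W_0$ the forms satisfy $\langle e_R\mid f_R\rangle=0$. Nonnegativity of $\cR$ then forces $(-f_R,e_R)$ into the kernel of the symmetric form restricted to $\cR$. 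I expect this bookkeeping to be the delicate step. The key point is that degeneracy of $\cR$ in directions hit by $\D_0$ is exactly what enlarges $W$ by the same amount it enlarges $W_0$.

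For the converse, let $\M$ be maximally monotone, so $\dim\M=n$. Choose a basis representation $\M=\Img\begin{bmatrix}F\\E\end{bmatrix}$ with $F,E\in\mR^{n,n}$ and $E^\top F+F^\top E\ge 0$. Split this symmetric part as $E^\top F+F^\top E=2R^\top R$, with $R\in\mR^{r,n}$ where $r$ is its rank. Take $\F_R=\E_R=\mR^r$ and $\cR=\graph(I_r)$, which is a nonnegative Lagrange structure. Then define
\[
\D=\Img\begin{bmatrix}F & 0\\ E & \ast\\ -R & \ast\\ R & \ast\end{bmatrix},
\]
where the columns $\ast$ are completed so that $\D$ becomes maximal isotropic for $\langle\cdot,\cdot\rangle_+$. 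The first block column is already isotropic, since $E^\top F+F^\top E-2R^\top R=0$, so completion is possible by Witt extension. Two checks remain. First, the extra columns must not add points to the composition; I would choose them with components in $\X$ only, from the isotropic complement. Second, $(-f_R,e_R)=(R z,Rz)\in\cR$ recovers exactly $\M$. Both follow from a dimension count ($\D\circ\cR\supset\M$ and $\dim\D\circ\cR\le n$ by the first part).
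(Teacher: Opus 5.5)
There is a genuine gap in the first half: you never prove $\dim\M\ge n$, i.e.\ that $\D\circ\cR$ is \emph{maximal}, which is the real content of that claim. The monotonicity step is correct. The intersection count gives only $\dim W\ge n$, and since $\dim\M=\dim W-\dim W_0$ this says nothing unless $W_0$ is controlled. $W_0$ can be large: take $\cR=\F_R\times\{0\}$ and $\D=\D_1\times(\F_R\times\{0\})$ with $\D_1$ a Dirac structure on $\X\times\X^*$; then $W_0\cong\F_R$. Everything after ``the main obstacle'' is a list of possible strategies rather than an argument. The sketched direct maximality test is also not well posed, since monotonicity of $\M+\mathrm{span}\{(f',e')\}$ is not a bilinear condition of the kind you write.

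What is missing is an \emph{exact} count rather than a bound. Put $\sigma(f_R,e_R)=(-f_R,e_R)$ and $\tilde\cR=\sigma\cR$. Then $\langle w_1,w_2\rangle_+=\langle w_1,\sigma w_2\rangle_-$, hence $\tilde\cR^{\perp_+}=\cR$ and $\cR^{\perp_+}=\tilde\cR$. Using $\D^{\perp_+}=\D$ and $(A\cap B)^{\perp}=A^{\perp}+B^{\perp}$ for the nondegenerate form,
\[
\bigl(\D\cap(\X\times\X^*\times\tilde\cR)\bigr)^{\perp_+}=\D+\bigl(\{0\}\times\{0\}\times\cR\bigr).
\]
This gives $\dim W=n+\dim(\D_0\cap\cR)$, while $W_0\cong\D_0\cap\tilde\cR$. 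These two intersections coincide, for three reasons:
\begin{itemize}
\item every vector of $\D_0$ is $\langle\cdot,\cdot\rangle_+$-isotropic;
\item an isotropic vector of the positive semidefinite space $\cR$ (resp.\ the negative semidefinite space $\tilde\cR$) lies in its radical;
\item both radicals equal $\cR\cap\tilde\cR$.
\end{itemize}
Hence $\dim\M=n$. This is exactly your heuristic about the degeneracy of $\cR$, but it has to be carried out.

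A shorter variant of the same computation works too. It gives $\M^{\perp_+}=\{(f,e)\mid \exists\,(f_R,e_R)\in\cR \mbox{ s.t. } (f,e,f_R,e_R)\in\D\}$, i.e.\ composition without the sign flip. On this space $\langle e\mid f\rangle=-\langle e_R\mid f_R\rangle\le 0$. A nonpositive subspace of $\X\times\X^*$ has dimension at most $n$, so $\dim\M=2n-\dim\M^{\perp_+}\ge n$.

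Your converse is fine and does not depend on this gap. Once the span of your first block column is extended to any Dirac structure $\D$, you have $\M\subseteq\D\circ\cR$, with $\D\circ\cR$ monotone by the easy half. Equality then follows from maximality of $\M$, or from the dimension bound in Proposition~\ref{prop:monotone0}. No special choice of the completing columns is needed. The remark about taking them ``with components in $\X$ only'' can be dropped; it also clashes with the zero block in your $\X$-row.

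The paper only quotes this proposition without proof, so there is no internal argument to compare routes with.
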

This means that in the definition of a linear dissipative port-Hamiltonian DAE system (with Dirac structure $\D$ and resistive structure $\cR$), we can always replace the pair $\D,\cR$ by a maximally monotone structure $\M$. Thus, dissipative port-Hamiltonian systems $(\D,\cL,\cR)$ can be equivalently defined by a pair $(\M, \cL)$, with $\M$ a maximally monotone structure, and $\cL$ a Lagrage structure.

Dissipative port-Hamiltonian systems $(\M, \cL)$ admit the following coordinate representations (not involving resistive variables $f_R,e_R$). Obviously, any subspace $ \M \subset \X \times \X^*$ can be represented in image representation as
\begin{equation*}
\M = \im \bma V \\ W \ema
\end{equation*}
for square matrices $V,W$ of dimension equal to $\dim \X$. It is easily seen that $\M$ represented in this way is maximally monotone if and only if $V,W$ are such that
\begin{equation}
\label{maxmon}
W^\top V + V^\top W \geq 0, \; \rank \bma V^\top & W^\top \ema = \dim \X.
\end{equation}
However, in order to obtain equations we need kernel representations instead of image representations. Thus let $V,W$ satisfy \eqref{maxmon}, and let $F,G$ be a maximal annihilator of $\im \bma V \\ W \ema$, i.e., $FV+GW=0$ and $\rank \bma F & G \ema= \dim \X$. Then the dissipative port-Hamiltonian system $(\M,\cL)$ takes the form
\begin{equation*}
FP\dot{z}= GSz,
\end{equation*}
where the pair $P,S$ specifies as before the Lagrange structure $\cL$. In particular, the algebraic constraints are determined by $E=FP$, and similar to the case without energy dissipation we may distinguish between algebraic constraints due to singularity of $P$ (Lagrange algebraic constraints) and due to singularity of $F$ (algebraic constraints arising from the maximally monotone structure). In particular, we have the algebraic constraints
\begin{equation*}
Sz \in \Img W,
\end{equation*}
generalizing Dirac algebraic constraints. Furthermore, it is immediately seen that the dynamics of any dissipative port-Hamiltonian systems satisfies the inequality
%
\begin{equation*}
\frac{d}{dt} \frac{1}{2} z^\top S^\top P z = z^\top S^\top P\dot{z} \leq 0,
\end{equation*}
since $\bma P\dot{z} \\ -Sz \ema \in \M = \ker \bma F & G \ema$.

While \eqref{maxmon} is a direct characterization of maximal monotonicity of a subspace $\M$ given in image representation, we do not yet have a corresponding characterization for kernel representations $F,G$. This is done in the following proposition.
\begin{proposition}\label{prop:maxmon}
The matrix
\[
\M = \im \bma V \\ W \ema
\]
is the image representation of a maximally monotone structure if and only if for every  maximal annihilator $
\mat{cc} F & G\rix$ of $\M$ one has $GF^\top+F G^\top \leq 0$.
\end{proposition}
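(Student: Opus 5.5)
The plan is to identify the matrix $GF^\top+FG^\top$ with the quadratic form of $\langle\cdot,\cdot\rangle_+$ on the $\Sigma_+$-orthogonal complement of $\M$, and then to use a Sylvester-inertia argument for the indefinite form $\Sigma_+$. Throughout, $n=\dim\X$.

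\emph{Preliminaries.} Let $\bma F & G \ema$ be a maximal annihilator of $\M$, so $FV+GW=0$ and $\rank \bma F & G\ema=n$. Since the row space of $\bma F & G \ema$ is the full Euclidean orthogonal complement of $\M$, we have $\M=\ker\bma F & G\ema$ and $\dim\M=2n-n=n$. Any two maximal annihilators differ by an invertible left factor: $\bma F' & G'\ema=U\bma F & G\ema$. Then $G'F'^\top+F'G'^\top=U(GF^\top+FG^\top)U^\top$, so the sign condition holds for every maximal annihilator as soon as it holds for one. It therefore suffices to fix one.

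\emph{Step 1: the $+$-complement.} Define $\M^{\perp_+}:=\{(f',e') \mid \langle e',f\rangle+\langle e,f'\rangle=0 \ \forall (f,e)\in\M\}$. The condition says exactly that $\bma e' \\ f' \ema$ lies in the Euclidean complement of $\M$, which is $\im\bma F^\top \\ G^\top\ema$. Hence
\[
\M^{\perp_+}=\im \bma G^\top \\ F^\top \ema ,
\]
and this subspace has dimension $n$. For the element $(G^\top y,F^\top y)$ the quadratic form equals $2\,y^\top F G^\top y=y^\top(FG^\top+GF^\top)y$. Consequently, $GF^\top+FG^\top\le 0$ is equivalent to $\langle\cdot,\cdot\rangle_+$ being nonpositive on $\M^{\perp_+}$.

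\emph{Step 2: the key inertia fact.} The form $\Sigma_+$ on $\X\times\X^*$ has inertia $(n,n)$. So no nonnegative subspace has dimension larger than $n$, since it must meet an $n$-dimensional negative definite subspace trivially; the same holds for nonpositive subspaces.

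Now suppose $\M$ is maximally monotone, so it is nonnegative with $\dim\M=n$. Assume some $x\in\M^{\perp_+}$ has $\langle x,x\rangle_+>0$. Then $x\notin\M$: otherwise $x\in\M\cap\M^{\perp_+}$ would force $\langle x,x\rangle_+=0$. Moreover $\M+\mathrm{span}\{x\}$ is nonnegative, because the cross terms vanish by $+$-orthogonality. This gives a nonnegative subspace of dimension $n+1$, a contradiction. Hence $\M^{\perp_+}$ is nonpositive, i.e. $GF^\top+FG^\top\le0$.

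Conversely, suppose $GF^\top+FG^\top\le 0$, so $\M^{\perp_+}$ is nonpositive of dimension $n$. Using $(\M^{\perp_+})^{\perp_+}=\M$, which holds by nondegeneracy of $\Sigma_+$, the symmetric argument shows that any $x\in\M$ with $\langle x,x\rangle_+<0$ would yield an $(n+1)$-dimensional nonpositive subspace. Hence $\M$ is monotone, and since $\dim\M=n$, Proposition~\ref{prop:monotone0} gives maximal monotonicity.

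The main point needing care is Step 1, namely getting the swap of $F$ and $G$ and the transposes right when passing from the kernel representation to the $+$-complement. The inertia argument in Step 2 is standard once that identification is in place.
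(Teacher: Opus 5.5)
Your proof is correct, and it takes a genuinely different route from the paper's.

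The paper proves only the ``only if'' direction. For invertible $F$ it writes $\M$ as the graph $f=-F^{-1}Ge$, reads off $F^{-1}G+G^\top F^{-\top}\leq 0$ from $\langle e,f\rangle\geq 0$, and multiplies by $F$ on the left and $F^\top$ on the right. For singular $F$ it merely asserts that $\M$ is a limit of maximally monotone $\M'$ with invertible $F'$ and appeals to continuity.

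You instead recognize $GF^\top+FG^\top$ as the Gram matrix of $\langle\cdot,\cdot\rangle_+$ on $\M^{\perp_+}=\im\bma G^\top \\ F^\top \ema$. You then use the inertia $(n,n)$ of $\Sigma_+$ to show that an $n$-dimensional subspace is nonnegative exactly when its $+$-orthogonal complement is nonpositive. This gains three things:
\begin{enumerate}
\item singular $F$ needs no perturbation argument, whereas the paper leaves both its density claim and the choice of converging annihilators unjustified;
\item independence from the choice of annihilator is made explicit through the congruence $U(GF^\top+FG^\top)U^\top$;
\item the converse direction is actually proved.
\end{enumerate}
What the paper's route offers is a direct view of the generic case as the graph of the monotone map $-F^{-1}G$. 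That is the form the paper uses later to obtain the explicit ODE representation.

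One small point concerns your preliminaries. The claim that the row space of $\bma F & G \ema$ is the whole Euclidean complement of $\M$ presupposes $\dim\M=n$. In the ``only if'' direction this follows from Proposition~\ref{prop:monotone0}. In the ``if'' direction, suppose ``maximal annihilator'' is read as the paper literally defines it, namely only $FV+GW=0$ and $\rank\bma F & G\ema=n$. Then you should add one line: if $\dim\M<n$, then $\dim\M^{\perp_+}>n$, so by your inertia fact $\M^{\perp_+}$ contains some $x$ with $\langle x,x\rangle_+>0$. Any $n$-dimensional subspace of $\M^{\perp_+}$ containing $x$ then yields an annihilator violating $GF^\top+FG^\top\leq 0$. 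Hence the hypothesis itself forces $\dim\M=n$, and the rest of your argument goes through.
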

\proof
%
%
%
%
%
%
If $\M= \ker \bma F & G \ema$ with $F$ \emph{invertible}, then the $(f,e) \in \M$ are satisfying $f= - F^{-1}Ge$. Since $<e \mid f> \, \geq \, 0$ for all $(f,e) \in \M$, it follows that $e^\top F^{-1}Ge \leq 0$ for all $e$. Thus $F^{-1}G + G^\top F^{-\top} \leq 0$, or equivalently $GF^\top + FG^\top \leq 0$.
On the other hand, if $\M$ is such that $F$ is \emph{not} invertible, then arbitrarily close to $\M$ there will exist a maximally monotone $\M'$ with associated $F'$ being invertible, and thus by continuity $GF^\top + FG^\top \leq 0$ remains to hold.
\eproof

These observations are  immediately extended to linear dissipative port-Hamiltonian systems with \emph{external ports}. In fact, let $\F_P \times \E_P$, with $\E_P=\F_P^*$ (here the index $P$ denotes external ports), be the space of external flows and efforts (inputs and outputs). Then a linear dissipative port-Hamiltonian system with external ports is specified by a maximally monotone structure
$\M \subset \X \times \X^* \times \F_P \times \E_P$,
together with a Lagrange structure $\cL \subset \X \times \X^*$ modeling energy storage.

After recalling the extension of linear port-Hamiltonian DAE systems to system with resistive and external ports, in the next subsection we proceed analogously for the nonlinear case.

\subsection{The nonlinear case}

In order to eliminate the resistive port variables $f_R,e_R$ in the nonlinear case we proceed as follows. Importantly, here we \emph{restrict} ourselves to  \emph{modulated linear} energy dissipation relations $\cR$ which means that $\cR$ is assumed to be defined by subspaces
\begin{equation}
\cR(x) \subset \F_R \times \E_R,
\end{equation}
depending on $x \in \X$, in the sense that for every $x \in \X$ the subspace $\cR(x) \subset \F_R \times \E_R$ is a nonnegative Lagrange structure. Thus, energy dissipation is assumed to be a \emph{modulated} resistive structure. In this case we can apply the same reasoning as in the linear case in the following way.

Let $\D$ be a Dirac structure on $\X$ as before. Thus, for every $x \in \X$ we have that
\begin{equation}
\D(x) \subset T_x\X \times T_x^*\X \times \F_R \times \E_R.
\end{equation}
is a Dirac structure as in the linear case. Then consider
for every $x \in \X$ the composition
\begin{equation}
\begin{array}{l}
\M(x):=\D(x) \circ \cR(x) = \{(f,e) \in T_x\X \times T_x^*\X \mid \\[2mm] \qquad
\exists\, (-f_R,e_R) \in \cR(x) \mbox{ s.t. } (f,e,f_R,e_R) \in \D(x) \}.
\end{array}
\end{equation}
Using the same arguments as in the linear case, this defines for every $x$ a maximally monotone structure $\M(x) \subset T_x\X \times T_x^*\X$. Thus the composition of the Dirac structure modulated by $x$ and the resistive structure modulated by the same $x$ defines a maximally monotone structure, again modulated by this $x$.
Conversely, again using the results of the linear case, under regularity conditions every maximally monotone structure $\M(x)$ modulated by $x$ can be written as the composition of a Dirac structure $\D(x)$ and resistive structure $\cR(x)$, both modulated by $x$. Using the matrix representation developed for the linear case this means that there exist matrices $F(x),G(x)$ such that
\begin{equation}
\M(x) =\{ (f,e) \in T_x\X \times T_x^*\X \mid F(x)f+ G(x)e=0 \},
\end{equation}
where
\begin{equation}
F(x)G^\top(x) + G(x)F^\top(x) \leq 0, \quad \rank \bma F(x) & G(x) \ema = \dim \X, x
\in \X.
\end{equation}
If the Lagrangian submanifold is generated by a function $\mathcal H$, this means that the dissipative port-Hamiltonian dynamics is given as
\begin{equation}
F(x) \dot{x}=G(x) \frac{\partial \mathcal H}{\partial x}(x).
\end{equation}
In particular, if $F(x)$ is invertible then the dynamics takes the form
$\dot{x}=-M(x) \frac{\partial \mathcal H}{\partial x}(x)$ for some matrix (modulated by $x$) that is monotone, i.e., $M(x) + M(x)^\top \geq 0$. Splitting $M(x)=J(x)-R(x)$ into its skew-symmetric and symmetric part, the dynamics thus takes the well-known port-Hamiltonian ODE form
\begin{equation}
\dot{x} = \left[J(x) -R(x)\right] \frac{\partial \mathcal H}{\partial x}(x),\quad  J(x)=-J^\top (x), R(x)=R^\top (x) \geq 0.
\end{equation}
Finally, external port variables $f_P,e_P$, i.e., inputs and outputs, are added by starting from an extended modulated Dirac structure
\begin{equation}
\D(x) \subset T_X\X \times T_x^*\X \times \F_R \times \E_R \times \F_P \times \E_P,
\end{equation}
together with a modulated resistive structure $\cR(x)$. Composition through the $f_R,e_R$ variables leads to a modulated maximally monotone structure
\begin{equation}
\M(x) \subset T_x\X \times T_x^*\X \times \F_P \times \E_P,
\end{equation}
given as $\M(x) = \ker \bma F(x) & G(x) & F_P(x) & G_P(x) \ema$ for matrix functions $F(x)$, $G(x)$, $F_R(x)$, $G_R(x)$ satisfying
\begin{equation}
\begin{array}{l}
F(x)G^\top(x) + G(x) F^\top(x) + F_P(x)G_P^\top(x) + G_P(x) F_P^\top(x) \leq 0, \\[2mm]
\rank \bma F(x) & G(x) & F_P(x) & G_P(x) \ema = \dim \X \times \F_P.
\end{array}
\end{equation}
The resulting dissipative port-Hamiltonian system with external ports is thus given as
\begin{equation}
F(x)\dot{x}=G(x) \frac{\partial H}{\partial x}(x) + F_P(x)f_P + G_P(x)e_P.
\end{equation}
with external port variables $f_P,e_P$.

In this section we have shown how to incorporate dissipation as well as external ports into port-Hamiltonian differential equations which completes the picture.

\section{Conclusion and Outlook}\label{sec:conclusion}

We have presented a systematic study of constrained nonlinear port-Ha\-mil\-to\-ni\-an systems and have shown how constraints (of Lagrange or Dirac type) can be incorporated into the port-Hamiltonian formulation, and how one can be converted into the other. The analysis uses Morse families and their skew counterparts and is illustrated in detail for mechanical systems. It is also shown how to incorporate dissipation and external ports. Further extensions could be the analysis of the regularity requirements (index) of these equations, as well as the extension to infinite dimensional constrained port-Hamiltonian operator equations.



\section{Appendix A: Augmentation of Dirac, Lagrange, and  maximally monotone structures to graphs of mappings.}
We recall from \cite{SchM18}, see also \cite{DalS99}, the following basic result concerning Dirac structures on linear spaces. Denote the projection from $\X \times \X^*$ to $\X$ by $\pi$, and from $\X \times \X^*$ to $\X^*$ by $\pi^*$.
\begin{proposition}
\label{prop:diracaug}
Let $\X$ be a finite-dimensional linear space. Consider any Dirac structure $\D \subset \X \times \X^*$, and suppose that $\pi^*(\D) \subset \X^*$ is $(n-k)$-dimensional, where $\pi^*: \X \times \X^* \to \X^*$ is the projection onto the second argument. With $\Lambda:= \mR^k$,  there exists a full-rank $n \times k$ map $C_\lambda :\Lambda^* \to \X$ and a skew-symmetric map $J: \X^* \to \X $, such that $\D$ is given as
\begin{equation*}
\D = \{(f,e) \in \X \times \X^* \mid\ \exists\ e_\lambda \in \Lambda^* \mbox{ s.t. } -f = Je + C_\lambda e_\lambda,\, 0 = C_\lambda^\top e \}.
\end{equation*}
In fact, one can take any $C_\lambda$ such that $\ker C_\lambda^\top = \pi^* (\D)$. Conversely, any such equations for a skew-symmetric map $J: \X^* \to \X$ define a Dirac structure.
\end{proposition}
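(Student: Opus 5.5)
The plan is to work directly with the isotropy of $\D$ with respect to $\langle\cdot,\cdot\rangle_+$. The goal is to show that $\D$ is determined by $\pi^*(\D)$ together with a skew map on it.

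First I will identify the fibre over $e=0$. Set $\mathcal G:=\pi^*(\D)$, which has dimension $n-k$, and $\mathcal K:=\{f \mid (f,0)\in\D\}$. Since $\dim\D=n$ (maximal isotropic for a form of signature $(n,n)$), we get $\dim\mathcal K = n-(n-k)=k$. Isotropy gives $\langle e,f\rangle=0$ for all $(f,0)\in\D$ and all $e\in\mathcal G$. Hence $\mathcal K\subset\mathcal G^{\perp}$ (the annihilator in $\X$), and both have dimension $k$, so $\mathcal K=\mathcal G^\perp$.

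Next I choose $C_\lambda$. Take any full-rank $n\times k$ matrix $C_\lambda$ with $\ker C_\lambda^\top=\mathcal G$; equivalently, $\Img C_\lambda=\mathcal G^\perp=\mathcal K$. This shows at once that $\{(-C_\lambda e_\lambda,0)\}\subset\D$.

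Then I construct $J$. For each $e\in\mathcal G$ the set of $f$ with $(f,e)\in\D$ is an affine space $f_0(e)+\mathcal K$. Choose a linear section $e\mapsto -J_0e$ on $\mathcal G$. Isotropy applied to two elements gives $\langle e_2,J_0e_1\rangle+\langle e_1,J_0e_2\rangle=0$ for $e_1,e_2\in\mathcal G$, so the bilinear form $(e_1,e_2)\mapsto\langle e_2,J_0e_1\rangle$ is skew on $\mathcal G$. Extend this form to a skew form on all of $\X^*$ by choosing a complement of $\mathcal G$, setting the form on it suitably (e.g.\ zero), and extending skewly. The associated skew map $J:\X^*\to\X$ then satisfies $\langle e_2,Je_1\rangle=\langle e_2,J_0e_1\rangle$ for $e_1,e_2\in\mathcal G$. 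This only says $Je-J_0e\in\mathcal G^\perp=\Img C_\lambda$ for $e\in\mathcal G$, which is enough because that difference can be absorbed into $C_\lambda e_\lambda$.

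It follows that the right-hand set $\D'$ is contained in $\D$: elements with $e\in\mathcal G$ and $-f=Je+C_\lambda e_\lambda$ lie in $J_0$-graph$+\mathcal K\subset\D$. For the reverse, $\D'$ has dimension $(n-k)+k=n$, since $e$ ranges over $\mathcal G$ and $C_\lambda$ is injective. So $\D'=\D$.

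For the converse, take arbitrary skew $J$ and full-rank $C_\lambda$. A direct computation gives $\langle e,f\rangle=-e^\top Je-e^\top C_\lambda e_\lambda=0$ using $C_\lambda^\top e=0$, so $\D'$ is isotropic. The same dimension count gives $\dim\D'=n$, so it is maximal.

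The main obstacle is constructing a globally defined skew $J$ from data known only on $\mathcal G$. That is handled by the freedom modulo $\Img C_\lambda=\mathcal G^\perp$ and a skew extension; the rest is dimension counting.
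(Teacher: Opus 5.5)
Your proof is correct. The paper gives no proof of its own here, since the proposition is only quoted from the cited references, and your route is the standard one. Isotropy plus a dimension count give $\{f \mid (f,0)\in\D\}=\pi^*(\D)^\perp=\Img C_\lambda$. A linear section over $\pi^*(\D)$ induces a skew form on $\pi^*(\D)$, and you extend it to a skew $J$ on $\X^*$, for instance $\langle e_2,Je_1\rangle:=\langle \Pi e_2,J_0\Pi e_1\rangle$ with $\Pi$ a projection of $\X^*$ onto $\pi^*(\D)$. On $\pi^*(\D)$ this $J$ differs from $J_0$ only by elements of $\Img C_\lambda$, and a dimension count gives both inclusions.

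You use two facts without stating them. First, isotropic subspaces for $\langle\cdot,\cdot\rangle_+$ have dimension at most $n$, and maximal ones have dimension exactly $n$. Second, in the converse, $\langle e,f\rangle=0$ on $\D'$ implies the bilinear form vanishes on $\D'$ by polarization, as the paper itself notes. Both are standard, so there is no gap.
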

Hence any Dirac structure $\D \subset \X \times \X^*$ can be {\it augmented} to the graph of the skew-symmetric map
\begin{equation*}
J_a:=
\bma - J & - C_\lambda \\[2mm]
C_\lambda^\top & 0 \ema
\end{equation*}
from $\X^* \times \Lambda^*$ to $\X \times \Lambda$.

Analogously, for Lagrange structures one has the following result, see \cite{SchM18}.
\begin{proposition}
\label{prop:lagrangeaug}
For any Lagrange structure $\cL\in  \X \times \X^*$ there exists a full-rank $n \times k$ matrix $C_\mu$ and a symmetric $n \times n$ matrix $Q$ such that
\begin{equation*}
\cL = \{(x,e) \in \X \times \X^* \mid\ \exists \mu \in \mR^k \mbox{ s.t. }e =  Qx + C_\mu \mu, 0  =  C_\mu^\top x \}.
\end{equation*}
In fact, one can take any $C_\mu$ such that $\ker C_\mu^\top = \pi (\cL)$. Conversely, any such equations for a symmetric map $Q: \X \to \X^*$ define a Lagrange structure.
\end{proposition}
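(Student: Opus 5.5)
The plan is to reduce the statement to Proposition~\ref{prop:diracaug} by a symplectic-type transformation that swaps the roles of $\X$ and $\X^*$. Let $\pi(\cL)\subset\X$ have dimension $n-k$, and choose a full-rank $n\times k$ matrix $C_\mu$ with $\ker C_\mu^\top=\pi(\cL)$. I will first prove the converse direction, since it is a direct computation. For any symmetric $Q$, the set
\[
\cL_Q:=\{(x,e)\mid \exists\,\mu:\ e=Qx+C_\mu\mu,\ C_\mu^\top x=0\}
\]
is isotropic for $\langle\cdot,\cdot\rangle_-$. Indeed, for two elements we get
\[
e_2^\top x_1-e_1^\top x_2 = x_2^\top Q x_1+\mu_2^\top C_\mu^\top x_1 - x_1^\top Q x_2-\mu_1^\top C_\mu^\top x_2 = 0 .
\]
Its dimension is $(n-k)+k=n$, because $C_\mu$ has full rank, so $\mu\mapsto C_\mu\mu$ is injective and $x$ ranges over $\ker C_\mu^\top$. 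Hence $\cL_Q$ is maximal, i.e.\ a Lagrange structure.

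For the main direction, I first identify the fibre $\cL\cap(\{0\}\times\X^*)$. If $(0,e)\in\cL$, then $e^\top x=0$ for all $x\in\pi(\cL)$ by isotropy, so $e\in\pi(\cL)^\perp=\Img C_\mu$. By a dimension count this fibre has dimension $n-(n-k)=k$, so it equals $\{0\}\times\Img C_\mu$. Consequently, for each $x\in\pi(\cL)$ the admissible $e$ form an affine space $e_0(x)+\Img C_\mu$, and any linear selection $x\mapsto Q_0x$ on $\pi(\cL)$ with $(x,Q_0x)\in\cL$ describes $\cL$ completely.

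The key step, and the main obstacle, is showing that $Q_0:\pi(\cL)\to\X^*$ can be replaced by a \emph{symmetric} $Q$ on all of $\X$. Isotropy gives $x_2^\top Q_0x_1=x_1^\top Q_0x_2$ for $x_1,x_2\in\pi(\cL)$. Thus the bilinear form $b(x_1,x_2):=x_2^\top Q_0 x_1$ is symmetric on $\pi(\cL)$. I will pick a complement $\mathcal V$ with $\X=\pi(\cL)\oplus\mathcal V$ and extend $b$ by zero on $\mathcal V$ and on the mixed terms. This yields a symmetric $Q$ with $x_2^\top Qx_1=b(x_1,x_2)$ for $x_1,x_2\in\pi(\cL)$. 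Then, for $x\in\pi(\cL)$, the vector $Qx-Q_0x$ annihilates $\pi(\cL)$, so it lies in $\Img C_\mu$. Therefore $(x,Qx)\in\cL$, and $\cL=\cL_Q$ follows, either by the fibre description above or by the equal-dimension inclusion $\cL_Q\subset\cL$.

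Alternatively, one could transport Proposition~\ref{prop:diracaug} through the map $(x,e)\mapsto(x,e)$ with form $\Sigma_-$ in place of $\Sigma_+$, but the direct argument above is cleaner. Finally, the freedom in choosing $C_\mu$ is clear, since only $\Img C_\mu=\pi(\cL)^\perp$ entered the argument.
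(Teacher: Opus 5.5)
The paper gives no proof of this proposition. It is quoted from \cite{SchM18}, just as its Dirac analogue, Proposition~\ref{prop:diracaug}, is quoted from \cite{SchM18,DalS99}, so there is no in-paper argument to compare yours against. Judged on its own, your direct argument is correct and complete:
\begin{itemize}
\item the converse is a clean computation;
\item the identification $\cL\cap(\{0\}\times\X^*)=\{0\}\times\Img C_\mu$ follows from isotropy together with $\dim\cL=n$;
\item the symmetric extension of $b$ from $\pi(\cL)$ to $\X$, combined with the equal-dimension inclusion $\cL_Q\subset\cL$, settles the main direction.
\end{itemize}
You can also write it in the paper's coordinates \eqref{lagrangespace}, $\cL=\Img\bma P\\ S\ema$ with $S^\top P=P^\top S$. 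Take $\mathcal V$ to be the Euclidean orthogonal complement of $\pi(\cL)=\Img P$; your construction then becomes the explicit choice $Q=(P^{+})^\top P^\top S P^{+}$, where $P^{+}$ is the Moore--Penrose pseudoinverse. This $Q$ is symmetric and satisfies $P^\top QP=P^\top S$. Hence $Sz-QPz\in\ker P^\top=\Img C_\mu$ for all $z$, which gives $\cL\subset\cL_Q$ in one line.

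Two small repairs are needed.

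First, you use twice, without comment, that a Lagrange structure has dimension exactly $n$: once in the fibre count, and once to pass from $\cL_Q\subset\cL$ to equality. In the converse you also use that isotropic subspaces have dimension at most $n$. Add the one-line reason. For a $\langle\cdot,\cdot\rangle_-$-isotropic $U$ one has $U\subset U^{\perp_-}$ with $\dim U^{\perp_-}=2n-\dim U$. If $U$ is maximal then $U=U^{\perp_-}$, since any $v\in U^{\perp_-}\setminus U$ could be adjoined (every vector is isotropic for a skew form).

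Second, drop the opening sentence announcing a reduction to Proposition~\ref{prop:diracaug}, and the closing remark about transporting it via $(x,e)\mapsto(x,e)$. The identity map does not send $\Sigma_-$-isotropic subspaces to $\Sigma_+$-isotropic ones. More generally, no linear change of variables can turn the skew form $\Sigma_-$ into the symmetric form $\Sigma_+$. The two propositions are therefore analogues, not transforms of each other. Your actual proof never uses that remark, so nothing is lost.
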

Hence any Lagrange structure $\cL \subset \X \times \X^*$ can be {\it augmented} to the graph of a symmetric map
\begin{equation*}
Q_a:=\begin{bmatrix} Q & C_\mu\\[2mm] C_\mu^\top & 0 \end{bmatrix} : \X \times \mR^k \to \X^* \times \mR^{k*}
\end{equation*}
The quadratic function
$\frac{1}{2}x^\top Qx + \mu^\top C_\mu^\top x$ associated with this symmetric map is called a \emph{Morse family} of the Lagrange structure $\cL$.

Finally, as shown in \cite{SchM23}, any maximal monotone structure $\M \subset \X \times \X^*$ can be augmented to the graph of a monotone mapping. Recall that a mapping $M:\X^* \to \X$ is \emph{monotone} if its matrix representation (also denoted by $M$) satisfies $M + M^\top \geq 0$.
\begin{proposition}
\label{prop:monotoneaug}
Consider a maximally monotone structure $\M \subset \X \times \X^*$. Write $\pi^* \M= \ker C_\lambda^\top$ for some matrix $C_\lambda: \Lambda \to \X$ with full column rank and linear space $\Lambda$. Then there exists a monotone map $M: \X^* \to \X$ such that
\begin{equation*}
\label{eq:monrep}
\M = \left \{(f,e) \in \X \times \X^* \mid\ \exists\ e_\lambda \in \Lambda^* \mbox{ s.t. } \bma f \\[2mm] 0 \ema = \bma -M & -C_\lambda \\[2mm] C_\lambda ^\top  & 0 \ema \bma e \\[2mm] e_\lambda \ema \right \}.
\end{equation*}
Conversely, for any monotone map $M$ this defines a maximally monotone structure.
\end{proposition}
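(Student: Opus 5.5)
The plan is to reduce everything to a normal form in adapted coordinates. The key structural fact is that the ``pure flow'' part $\M_0 := \{f \mid (f,0) \in \M\}$ equals the annihilator of $\pi^*\M$, namely $\im C_\lambda$. Once this is known, $\M$ is determined by a linear map on $\pi^*\M$ modulo $\im C_\lambda$, and that map can be extended to a monotone map on all of $\X^*$. Throughout I use the sign convention of \eqref{eq:monrep}: monotonicity of $\M$ is read as $\langle e\mid -f\rangle \ge 0$ on $\M$, matching the fact that $\M$ is built from the graph of $-M$. I will keep this sign consistent with Proposition~\ref{prop:monotone0} rather than re-derive it.

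\emph{Step 1: $\M_0 = \im C_\lambda$, which I expect to be the main obstacle.} Let $f_0 \in \im C_\lambda = (\ker C_\lambda^\top)^\perp = (\pi^*\M)^\perp$. For any $(f,e)\in\M$ and $t\in\mR$ we have $\langle e \mid f + t f_0\rangle = \langle e\mid f\rangle$, since $e \in \pi^*\M$ annihilates $f_0$. Hence $\M + \operatorname{span}\{(f_0,0)\}$ is again monotone, and maximality forces $(f_0,0)\in\M$. This gives $\im C_\lambda \subset \M_0$. For the reverse inclusion, use a dimension count: by Proposition~\ref{prop:monotone0}, $\dim \M = n$, and $\dim\M = \dim \pi^*\M + \dim \M_0 = (n-k) + \dim \M_0$. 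So $\dim \M_0 = k = \rank C_\lambda$, and equality holds.

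\emph{Step 2: normal form and construction of $M$.} Choose coordinates $e=(e_1,e_2)$ on $\X^*$ with $\pi^*\M = \{e_2 = 0\}$, and dual coordinates $f=(f_1,f_2)$ on $\X$. Then $(\pi^*\M)^\perp = \{f_1=0\}$, and after a change of basis in $\Lambda$ we may take $C_\lambda = \bma 0 \\ I_k\ema$, so that $C_\lambda^\top e = e_2$. By Step~1, for each $e_1$ the set of $f$ with $(f,(e_1,0))\in\M$ is a coset of $\{f_1=0\}$. Its $f_1$-component is therefore a well-defined linear function of $e_1$, which I write as $f_1=-Ae_1$. Monotonicity of $\M$ restricted to these elements gives $e_1^\top A e_1 \ge 0$, i.e.\ $A+A^\top\ge 0$. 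Define $M := \bma A & 0\\ 0 & 0\ema$, which is monotone. It remains to check that $\M$ coincides with the set in \eqref{eq:monrep}:
\begin{itemize}
\item the constraint $C_\lambda^\top e = 0$ gives $e_2=0$;
\item the first block row gives $f_1 = -Ae_1$;
\item $f_2 = -e_\lambda$ is arbitrary.
\end{itemize}
This is exactly the description of $\M$ obtained above.

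\emph{Step 3: converse.} Given any monotone $M$ and a full column rank $C_\lambda$, the set defined by \eqref{eq:monrep} is a subspace. For its elements, $\langle e\mid -f\rangle = e^\top M e + e^\top C_\lambda e_\lambda = e^\top M e \ge 0$, because $C_\lambda^\top e = 0$. The subspace is parametrized injectively by $(e,e_\lambda)\in \ker C_\lambda^\top\times\Lambda^*$. Injectivity holds because $C_\lambda$ has full column rank, so $C_\lambda e_\lambda$ determines $e_\lambda$. Hence its dimension is $(n-k)+k=n$, and Proposition~\ref{prop:monotone0} yields maximal monotonicity.
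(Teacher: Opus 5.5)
The paper does not prove this proposition; it quotes it from \cite{SchM23}. There is therefore no in-paper argument to compare with, and your proof is a correct, self-contained one.

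Step~1 is sound. Adding $(f_0,0)$ with $f_0\in(\pi^*\M)^\perp$ leaves $\langle e\mid f\rangle$ unchanged, so maximality gives $\im C_\lambda\subset\M_0$. The rank count $\dim\M=\dim\pi^*\M+\dim\M_0$ with $\dim\M=n$ then gives equality. This step replaces the identity $\D\cap(\X\times\{0\})=(\pi^*\D)^\perp\times\{0\}$, which for Dirac structures follows from $\D$ being its own orthogonal complement under $\langle\cdot,\cdot\rangle_+$. Maximally monotone structures are not self-orthogonal, so an argument like yours is genuinely needed.

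The normalization $C_\lambda=\bma 0\\ I_k\ema$ is legitimate. Replacing $C_\lambda$ by $C_\lambda T$ with $T$ invertible changes neither $\im C_\lambda$ nor $\ker C_\lambda^\top$, hence not the set being described. The rest of Step~2 and the converse are fine.

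You should, however, say plainly that you are proving a \emph{corrected} statement. Under the paper's convention, $\M$ monotone means $\langle e\mid f\rangle\ge 0$ on $\M$. That is why, in Proposition~\ref{prop:monotone0}, the graph $\{(Me,e)\}$ of a monotone $M$ is maximally monotone. With that convention the proposition as printed is false already for $k=0$. The graph of the identity map is maximally monotone with $\pi^*\M=\X^*$, but the representation would force $-M=I$, which is not monotone.

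So your reading $\langle e\mid -f\rangle\ge 0$ is not ``consistent with Proposition~\ref{prop:monotone0}''; it reverses that proposition's sign convention. An equivalent fix is to keep the paper's convention and replace $-M$ and $-C_\lambda$ by $M$ and $C_\lambda$ in the representation. Your argument proves either version essentially verbatim: for the second, write $f_1=Ae_1$ instead of $f_1=-Ae_1$. This works because the only thing you use from Proposition~\ref{prop:monotone0} is the dimension criterion, which is invariant under $(f,e)\mapsto(-f,e)$. State this explicitly rather than hedging.
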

Thus, any maximally monotone structure $\M$ can be augmented to the graph of the monotone mapping
\begin{equation*}
M_a:= \bma -M & -C_\lambda \\ C_\lambda^\top  & 0 \ema : \X^* \times \Lambda^* \to \X \times \Lambda.
\end{equation*}

\section{Appendix B: Augmentation of Dirac structures and Lagrangian submanifolds; Morse families.}

As a direct extension of the linear case (Proposition \ref{prop:diracaug}), any Dirac structure $\D$ on a manifold $\X$ can be locally written as the graph of a modulated skew-symmetric map on an augmented state manifold $\X_a$, cf. \cite{DalS99}. Denote in analogy with the linear case the projection of $T_x\X \times T_x^*\X$ to $T_x\X$ by $\pi$, and the projection of $T_x\X \times T_x^*\X$ to $T_x^*\X$ by $\pi^*$.
\begin{proposition}
\label{prop:nonldiracaug}
Consider any Dirac structure $\D$ on a manifold $\X$. Suppose that $\pi^*(\D(x)) \subset T_x\X^*$ is $(n-k)$-dimensional and set $\Lambda:= \mR^k$. Then there exists a full-rank $n \times k$ matrix function $C_\lambda(x): \Lambda^* \to T_x\X$ and a skew-symmetric $n \times n$ matrix function $J(x)$ such that locally
\begin{equation}
\label{Diracconstraint2}
\begin{array} {l} \D(x) = \left \{(f,e) \! \in T_x\X \times T_x^*\X \mid \exists\, e_\lambda \in \Lambda^* \mbox{ s.t. }-f = J(x)e + C_\lambda(x)e_\lambda,\right .\\[2mm] \qquad \qquad\left . 0  =  C_\lambda^\top(x)e \right \}.
\end{array}
\end{equation}
Conversely, any such equations for a skew-symmetric map $J(x): T_x^*\X \to T_x\X$ define a Dirac structure.
\end{proposition}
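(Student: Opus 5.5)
The plan is to reduce to the linear statement, Proposition~\ref{prop:diracaug}, applied pointwise, and then check that the pointwise choices of $C_\lambda(x)$ and $J(x)$ can be made smoothly on a neighbourhood. Fix $x_0 \in \X$ and a coordinate chart around it, so that $T_x\X \cong \mR^n$ and $T_x^*\X \cong \mR^{n*}$. Since $\D$ is a smooth subbundle, locally $\D(x) = \ker \bma K(x) & L(x) \ema = \Img \bma L^\top(x) \\ K^\top(x) \ema$ with smooth $K,L$ satisfying $KL^\top = -LK^\top$ and $\rank \bma K & L \ema = n$. Then $\pi^*(\D(x)) = \Img K^\top(x)$. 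By assumption this has constant dimension $n-k$, so $\Img K^\top(x)$ is a smooth rank-$(n-k)$ subbundle of $T^*\X$.

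\textbf{Step 1: construct $C_\lambda(x)$.} The annihilator of $\Img K^\top(x)$ is $\ker K(x)$, which is a smooth rank-$k$ subbundle of $T\X$ because $K$ has constant rank. Choose a smooth local frame $c_1(x),\dots,c_k(x)$ of $\ker K(x)$ and set $C_\lambda(x) = [c_1(x) \cdots c_k(x)]$. This matrix has full column rank and satisfies $\ker C_\lambda^\top(x) = \Img K^\top(x) = \pi^*(\D(x))$, which is exactly the condition required in Proposition~\ref{prop:diracaug}.

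\textbf{Step 2: construct $J(x)$ smoothly.} This is the main obstacle, since Proposition~\ref{prop:diracaug} only gives existence of $J$ at each point. I would reproduce its proof with smooth data. Take a smooth complement of $\Img K^\top(x)$, for instance by fixing a constant complement $W$ near $x_0$. For $e \in \pi^*(\D(x))$, the set of $f$ with $(f,e) \in \D(x)$ is an affine space over $\D(x) \cap (T_x\X \times \{0\}) = (\pi^*\D(x))^{\perp} = \Img C_\lambda(x)$. Hence $e \mapsto f$ is well defined modulo $\Img C_\lambda(x)$.

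To fix a representative smoothly, solve $K(x)f + L(x)e = 0$ by $f = -K^+(x)L(x)e$. Here $K^+$ is the Moore--Penrose pseudoinverse, which is smooth because $K$ has constant rank. The compatibility condition $L(x)e \in \Img K(x)$ must hold for $e \in \Img K^\top(x)$; this follows from $KL^\top = -LK^\top$. This defines a linear map $A(x)$ on $\Img K^\top(x)$. Power conservation gives $e^\top A(x)e = 0$ there, so $A(x)$ is skew on this subspace modulo $\Img C_\lambda$.

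Next extend $A(x)$ to all of $T_x^*\X$ via the projection onto $\Img K^\top(x)$ along $W$, and take the skew part. Specifically, with $P(x)$ this projection, set $J(x) = -\tfrac12\big(P^\top A P - (P^\top A P)^\top\big)$, adjusted so that $J(x)e$ differs from the $f$-representative only by an element of $\Img C_\lambda(x)$ for $e \in \pi^*\D(x)$. The verification mirrors the linear proof, and every ingredient is smooth in $x$.

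\textbf{Step 3: verify.} Check that the set defined by \eqref{Diracconstraint2} is contained in $\D(x)$. Both sets have dimension $n$: there are $n-k$ free choices of $e$ and $k$ of $e_\lambda$, and $C_\lambda$ is injective. Containment therefore gives equality, exactly as in the linear case.

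\textbf{Converse.} For any skew $J(x)$ and any $C_\lambda(x)$, Proposition~\ref{prop:diracaug} shows pointwise that \eqref{Diracconstraint2} is a Dirac structure. If $J$ and $C_\lambda$ are smooth with $C_\lambda$ of constant rank, this is a smooth rank-$n$ subbundle, because it is the image of a smooth constant-rank bundle map. It is therefore a Dirac structure on $\X$.
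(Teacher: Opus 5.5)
Your proposal is correct and follows the paper's route. The paper justifies this proposition only as a direct (pointwise) extension of the linear Proposition~\ref{prop:diracaug}, citing \cite{DalS99}, and your constant-rank arguments for the smooth frame $C_\lambda(x)$ of $\ker K(x)$ and the smooth choice of $J(x)$ supply the details it leaves implicit. No further ``adjustment'' of your $J(x)$ is needed. With $B=P^\top A P$ one has $e'^\top(B+B^\top)e=e'^\top Ae+e^\top Ae'=0$ for all $e,e'\in\pi^*(\D(x))$, so $(B+B^\top)e\in\Img C_\lambda(x)$ and hence $J(x)e\equiv -A(x)e$ modulo $\Img C_\lambda(x)$, exactly as your Step~3 requires.
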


The generalization of Proposition \ref{prop:lagrangeaug} from Lagrange structures to Lagrangian manifolds is more involved. See e.g. \cite{BarPM15} and the references quoted in there.
\begin{proposition}
\label{prop:nonllagrangeaug}
Consider a Lagrangian submanifold $\cL \subset T^*\X$, a point $\ell \in \cL$ and its projection $\pi(\ell) \in \X$. Then there exists a neighborhood $V$ of $\pi(\ell)$, a natural number $k$, a neighborhood $W$ of $0 \in \mR^k$, together with a smooth function $F: V \times W \to \mR$, such that the rank of $\frac{\partial F}{\partial \mu}$, with $\mu \in \mR^k$, is equal to $k$ on $\left(\frac{\partial F}{\partial \mu}\right)^{-1}(0)$, and
\begin{equation}
\label{morse3}
\left \{ \left(x, \frac{\partial F}{\partial x}(x, \mu) \right) \mid \frac{\partial F}{\partial \mu}(x, \mu)=0 \right \}
\end{equation}
is a neighborhood of the point $\ell$ in $\cL$. The function $F(x,\mu)$, seen as a function of $x$ and parametrized by $\mu \in \mR^k$, is called a Morse family for the Lagrangian submanifold $\cL$.
\end{proposition}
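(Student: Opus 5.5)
The statement is Proposition~\ref{prop:nonllagrangeaug}: the local existence of a Morse family for an arbitrary Lagrangian submanifold $\cL \subset T^*\X$. The plan is the classical Hörmander/Weinstein argument: choose coordinates in which $\cL$ becomes a graph over a mixed set of positions and momenta, then take a partial Legendre transform of the generating function of that graph.

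\textbf{Step 1 (linear algebra at $\ell$).} Fix natural coordinates $(x,e)$ near $\ell=(x_0,e_0)$. The tangent space $T_\ell\cL$ is a Lagrange structure in $T_{x_0}\X \times T^*_{x_0}\X$. Let $k:=\dim\ker d\pi|_{T_\ell\cL}$, i.e.\ $n$ minus the rank of the projection of $T_\ell\cL$ onto the $x$-directions.

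I would use the standard fact behind Proposition~\ref{prop:lagrangeaug}, applied after a linear symplectic relabelling. After permuting coordinates, $x=(x_1,x_2)$ with $\dim x_2=k$, and $T_\ell\cL$ projects isomorphically onto the $(x_1,e_2)$-coordinates, where $e=(e_1,e_2)$ is split conformally. This mixed-coordinate choice is the main point to get right. I would justify it by the argument that any Lagrangian subspace is transversal to some coordinate Lagrangian plane $\{x_1=0,\ e_2=0\}$ (Arnold's lemma).

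\textbf{Step 2 (local graph).} By the implicit function theorem, $\cL$ is locally the graph of a smooth map $(x_1,e_2)\mapsto(x_2,e_1)$. Because $\cL$ is Lagrangian, the $1$-form $e_1\,dx_1 - x_2\,de_2$ is closed on $\cL$. It is therefore locally exact, so there is a function $S(x_1,e_2)$ with
\[
e_1=\frac{\partial S}{\partial x_1}(x_1,e_2), \qquad x_2=-\frac{\partial S}{\partial e_2}(x_1,e_2).
\]
Equivalently, one can show this by checking that the Jacobian of the map is symmetric, which is exactly the Lagrangian condition.

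\textbf{Step 3 (Morse family).} Set $\mu:=e_2-e_{0,2}\in\mR^k$ and define
\[
F(x,\mu):=S(x_1,\mu+e_{0,2})+(\mu+e_{0,2})^\top x_2 .
\]
Then:
\begin{itemize}
\item $\partial F/\partial\mu = \partial S/\partial e_2 + x_2$, so the equation $\partial F/\partial\mu=0$ recovers $x_2=-\partial S/\partial e_2$;
\item $\partial F/\partial x_1=\partial S/\partial x_1=e_1$;
\item $\partial F/\partial x_2=\mu+e_{0,2}=e_2$.
\end{itemize}
Hence the set in \eqref{morse3} coincides with $\cL$ near $\ell$.

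The rank condition holds because $\partial^2F/\partial\mu\,\partial x_2 = I_k$. This gives $\rank\,\partial F/\partial\mu$'s Jacobian (with respect to $(x,\mu)$) $=k$ everywhere, so the zero set of $\partial F/\partial\mu$ is a smooth submanifold of the expected dimension. Shrinking to neighborhoods $V$ of $x_0$ and $W$ of $0$ finishes the proof.

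\textbf{Main obstacle.} I expect Step 1 to be the main obstacle: producing the splitting in which $\cL$ is a graph over $(x_1,e_2)$. The remaining steps are the implicit function theorem and a short computation.
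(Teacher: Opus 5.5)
Your proof is correct. The mixed-coordinate splitting at $\ell$ works, as does the local graph over $(x_1,e_2)$ with a mixed generating function $S$ obtained from closedness of $e_1\,dx_1 - x_2\,de_2$ on $\cL$. So does the partial Legendre transform $F(x,\mu)=S(x_1,\mu+e_{0,2})+(\mu+e_{0,2})^\top x_2$, with the rank condition following from $\partial^2F/\partial\mu\,\partial x_2=I_k$. The paper gives no proof of its own here and only refers to \cite{BarPM15} and the references therein; this classical H\"ormander--Weinstein construction is the standard argument found there, so your route is essentially the one the paper relies on.
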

Thus, locally any Lagrangian submanifold $\cL \subset T^*\X$ can be locally augmented to a \emph{regular} Lagrangian submanifold $\cL_a \subset T^* (\X \times \mR^k)$, with generating function $F(x,\mu)$.
\end{document}